\theoremstyle{plain}
\newtheorem{theorem}{Theorem}[section]
\newtheorem{corollary}[theorem]{Corollary}
\theoremstyle{definition}
\newtheorem{remark}[theorem]{Remark}
\newtheorem{definition}[theorem]{Definition}
\newtheorem{example}[theorem]{Example}
\begin{document}

\title{\textbf{Ricci-Yamabe solitons on a Walker 3-manifold}}
\author{\large \textit{Abdou Bousso\(^1\)\thanks{E-mail: abdoukskbousso@gmail.com (\textbf{A.Bousso})}}\\ \small \textit{\(^1\)D\'epartement de Math\'ematiques et Informatique, FST, Universit\'e Cheikh Anta Diop
}, \textit{Dakar}, \textit{S\'en\'egal}
\and
\large \textit{Ameth Ndiaye\(^2\)\thanks{E-mail: ameth1.ndiaye@ucad.edu.sn (\textbf{A.Ndiaye})}}\\ \small \textit{\(^2\)D\'epartement de Math\'ematiques, FASTEF, Universit\'e Cheikh Anta Diop}, \textit{Dakar}, \textit{S\'en\'egal}}
\date{}
\maketitle%
\begin{abstract}
This paper is devoted to the study of Ricci-Yamabe solitons on a particular class of Walker manifolds in dimension 3. We consider a Walker metric where the function $f$ depends on the three coordinates. The novelty of our research lies in the fact that the soliton field is found from the Hodge decomposition of De-Rham with the potential function. We classify all Ricci-Yamabe and gradient Ricci-Yamabe soliton in a given Walker 3-manifold by using this decomposition. Many examples are given in this paper for illustrating our results.
\end{abstract}

\vspace{1cm}
\noindent\textbf{ Keywords}:  Ricci-Yamabe solitons, Walker manifold, Ricci flow, Lie derivative, pseudo-Riemannian geometry.

\vspace{0.2cm}
\noindent\textbf{MSC 2020 : } 53C25, 53C21, 53E20.

\section{Introduction}
\label{sec:intro}
Differential geometry was revolutionized by central concepts such as the Ricci flow, introduced by Richard S. Hamilton \cite{hamilton1982}, which offers a powerful method to deform a Riemannian metric. Hamilton's work led to the notion of the Ricci soliton, which plays a role as a singular point or self-similar solution in the flow.
These structures are crucial for understanding the classification of manifolds and the dynamics of geometric flows. At the same time, the Yamabe problem \cite{yamabe1960} seeks to find a conformal metric with a constant scalar curvature, a problem which, like the study of solitons, is part of the research for canonical metrics on Riemannian manifolds.

The study of Ricci solitons has seen major advances, as shown by the work of H.-D. Cao on Ricci's gradient solitons \cite{cao2010} and the research of Petersen and Wylie on their classification \cite{petersen2009, petersen2010}. The concept has been extended to other flows, such as the Ricci-Bourguignon flow \cite{catino2017} and the $h$-solitons \cite{ghahremani2019}, demonstrating the richness of this approach. Our work focuses on Ricci-Yamabe solitons, a generalization that connects the Ricci flow equations with the scalar curvature, providing a more flexible framework.
 In this context, we are interested in a class of pseudo-Riemannian manifolds called Walker manifolds \cite{pravda2002}. Historically important in general relativity for modeling gravitational waves, these manifolds have a very particular geometric structure. They are characterized by the existence of a zero and parallel vector field \cite{hall2004}. 

 The study of Ricci soliton on a Walker manifold interest many geometers. In \cite{Pirhadi}, the authors characterize the generalized Ricci soliton equation on the three-dimensional Lorentzian Walker manifolds. They prove that every generalized Ricci soliton (with some constants) on a three-dimensional Lorentzian Walker manifold is steady. Moreover, non-trivial solutions for strictly Lorentzian Walker manifolds are derived. Finally, they give some conditions on the defining function $f$ under which a generalized Ricci soliton on a three-dimensional Lorentzian Walker manifold to be gradient. Calvaruso et al. \cite{Calvaruso}, investigate Ricci solitons on Lorentzian three-manifolds $(M,g_f )$ admitting a parallel degenerate line field. For several classes of these manifolds they described in terms of the defining function $f$, they prove the existence of non-trivial Ricci soliton. In \cite{Bousso_Walker4}, the same authors give a classification of Ricci-Yamabe solitons on a specific class of 4-dimensional Walker manifolds.
 
 The aim of this paper is to explicitly characterize Ricci-Yamabe solitons on a 3-dimensional Walker manifold, exploring the links between these geometric structures and building on the foundations of Riemannian geometry \cite{petersen2006} and Einstein manifolds \cite{besse1987}. We are conducting a detailed calculation of geometric quantities to establish the system of equations that governs the existence of these solitons.

\section{Preliminaries and Notations}
\label{sec:preliminaires}
In this section we give some basic definitions and we calculate the geometric properties of the three dimensional Walker manifold. At the end we recall the definition of Ricci-Yamabe soliton.
\subsection{Geometry of Walker 3-manfold}

 A Walker manifold of dimension $n$ is a pseudo-Riemannian manifold $(M, g)$ that admits a  null parallel distribution of rank $k < n$. In dimension 3, such a metric can be expressed in a canonical form in local coordinates. In this study, we focus on the Walker metric of 	\textit{pp-wave} type, whose matrix is expressed as:
\begin{eqnarray}\label{Metric} g =  2 dx^1 \otimes dx^3 + \varepsilon (dx^2)^2 + f(x^1, x^2, x^3) (dx^3)^2 
\end{eqnarray}
with $\varepsilon = \pm 1$ and $f$ is a smooth function of the variables $x^1, x^2, x^3$.\\
The matrix of the metric of a three-dimensional Walker manifold 
$(M, g)$ with coordinates $(x^1, x^2, x^3)$ is expressed as
\begin{eqnarray}\label{Metric1}
g_{ij}=\left(
\begin{array}{ccc}
0 & 0 & 1 \\
0 & \varepsilon & 0  \\
1 & 0 & f
\end{array}\right)\,\,\,\, 
\text{with inverse}\,\,\, \, 
g^{ij}=\left(
\begin{array}{ccc}
-f & 0 & 1 \\
0 & \varepsilon & 0  \\
1 & 0 & 0 
\end{array}\right)
\end{eqnarray}
thus we have 
$D = \mathrm{Span}({\partial_x})$ as the parallel degenerate line field.
Notice that when $\varepsilon=1$ and $\varepsilon=-1$ the Walker 
manifold has signature $(2,1)$ and $(1,2)$ respectively, and 
therefore is Lorentzian in both cases.
Using the formula 

\begin{eqnarray}\label{Chris}\Gamma^i_{jk}=\tfrac12\sum_{\ell=1}^3 g^{i\ell}\big(\partial_j g_{k\ell}+\partial_k g_{j\ell}-\partial_\ell g_{jk}\big)
\end{eqnarray}
we get the non zero components of the Christoffel symbols as

\begin{eqnarray}\label{Chris2}
\Gamma^{1}_{13} =  \frac{1}{2} f_{,1}, \,\, 
\Gamma^{1}_{23} =\frac{1}{2} f_{,2}, \, 
\Gamma^{1}_{33} = \frac{1}{2}(f f_{,1} + f_{,3}), \,
\Gamma^{2}_{33} = -\frac{1}{2\varepsilon} f_{,2}, \,\,
\Gamma^{3}_{33} =  -\frac{1}{2} f_{,1},
\end{eqnarray}
where $f_{, i}=\frac{\partial f}{\partial x^i}, \,\, i=1,2,3$.\\
For computing the Ricci and scalar curvatures, we need the two following equations :
\begin{eqnarray}\label{Ric}
\mathrm{Ric}_{ij}
=\partial_\ell \Gamma^\ell_{ij}-\partial_j \Gamma^\ell_{i\ell}
+\Gamma^\ell_{ij}\Gamma^m_{\ell m}-\Gamma^m_{i\ell}\Gamma^\ell_{jm}.
\end{eqnarray}
and 
\begin{eqnarray}\label{Scalar}
    \mathrm{Scal}=g^{ij}\mathrm{Ric}_{ij}
=\sum_{i=1}^3\sum_{j=1}^3 g^{ij}\mathrm{Ric}_{ij}.
\end{eqnarray}
Using the equations in \eqref{Ric}
and \eqref{Scalar}, we get respectively the components of the Ricci curvature and the scalar curvature of the metric of the Walker 3-manifold as

$$
\mathrm{Ric}_{ij}=
\begin{pmatrix}
0 & 0 & \tfrac12 f_{,11}\\[4pt]
0 & 0 & \tfrac12 f_{,12}\\[4pt]
\tfrac12 f_{,11} & \tfrac12 f_{,12} & \dfrac{\varepsilon f f_{,11}-f_{,22}}{2\varepsilon}
\end{pmatrix}.
$$
and

\begin{eqnarray}\label{Scal1}
\operatorname{Scal} &=&g^{11}R_{11}+g^{22}R_{22}+g^{33}R_{33}
+2\big(g^{12}R_{12}+g^{13}R_{13}+g^{23}R_{23}\big)\nonumber\\[4pt]
&=& (-f)\cdot 0 + \varepsilon\cdot 0 + 0\cdot R_{33}
+2\big(0\cdot0 + 1\cdot\tfrac12 f_{,11} + 0\cdot\tfrac12 f_{,12}\big)\nonumber\\[4pt]
&=& f_{,11}, 
\end{eqnarray}

where $f_{, ij}=\frac{\partial^2f}{\partial x^i\partial x^j}, i=1,2,3$.\\
The Hessian tensor of a smooth function $\mathcal{F}$ on a pseudo-Riemannian manifold is a symmetric tensor of type $(0,2)$ whose components are given by the formula:
\begin{eqnarray}\label{Hessien} (\nabla^2 \mathcal{F})_{ij} = \partial_i \partial_j \mathcal{F} - \Gamma^k_{ij} \partial_k \mathcal{F}. 
\end{eqnarray}
Using \eqref{Chris2} and \eqref{Hessien}, we calculate the components of the Hessian tensor for a function $\mathcal{F}(x^1, x^2, x^3)$ :
\begin{align*}
(\nabla\mathcal{F})_{11} &= \mathcal{F}_{,11}-\Gamma^k_{11}\mathcal{F}_{,k} = \mathcal{F}_{,11}. \\[6pt]
(\nabla\mathcal{F})_{12} &= \mathcal{F}_{,12}-\Gamma^k_{12}\mathcal{F}_{,k} = \mathcal{F}_{,12}. \\[6pt]
(\nabla\mathcal{F})_{13} &= \mathcal{F}_{,13}-\Gamma^k_{13}\mathcal{F}_{,k} = \mathcal{F}_{,13}-\Gamma^1_{13}\mathcal{F}_{,1} = \mathcal{F}_{,13}-\frac{1}{2} f_{,1}\,\mathcal{F}_{,1}. \\[6pt]
(\nabla\mathcal{F})_{22} &= \mathcal{F}_{,22}-\Gamma^k_{22}\mathcal{F}_{,k} = \mathcal{F}_{,22}. \\[6pt]
(\nabla\mathcal{F})_{23} &= \mathcal{F}_{,23}-\Gamma^k_{23}\mathcal{F}_{,k} = \mathcal{F}_{,23}-\Gamma^1_{23}\mathcal{F}_{,1} = \mathcal{F}_{,23}-\frac{1}{2} f_{,2}\,\mathcal{F}_{,1}. \\[6pt]
(\nabla\mathcal{F})_{33} &= \mathcal{F}_{,33}-\Gamma^k_{33}\mathcal{F}_{,k} \\
&= \mathcal{F}_{,33} -\Gamma^1_{33}\mathcal{F}_{,1} -\Gamma^2_{33}\mathcal{F}_{,2} -\Gamma^3_{33}\mathcal{F}_{,3} \\
&= \mathcal{F}_{,33} -\frac{1}{2}\left(f\,f_{,1}+f_{,3}\right)\mathcal{F}_{,1} +\frac{1}{2\varepsilon}f_{,2}\,\mathcal{F}_{,2} +\frac{1}{2} f_{,1}\,\mathcal{F}_{,3}.
\end{align*}
For a vector field $V = V^k \partial_k$, the components of the Lie derivative are given by the formula :
\begin{eqnarray}\label{Lie} (\mathcal{L}_V g)_{ij} = V^k \partial_k g_{ij} + g_{kj} \partial_i V^k + g_{ik} \partial_j V^k.
\end{eqnarray}
By using the equation in \eqref{Lie}, we get
\[
(\mathcal{L}_V g)_{11} = 2\partial_1 V^3.
\]

\[
(\mathcal{L}_V g)_{12} = \varepsilon \partial_1 V^2 + \partial_2 V^3.
\]

\[
(\mathcal{L}_V g)_{13} = \partial_1 V^1 + \partial_3 V^3 + f \partial_1 V^3.
\]

\[
(\mathcal{L}_V g)_{22} = 2\varepsilon \partial_2 V^2.
\]

\[
(\mathcal{L}_V g)_{23} = \partial_2 V^1 + f \partial_2 V^3 + \varepsilon \partial_3 V^2.
\]

\begin{align*}
(\mathcal{L}_V g)_{33} = V^1 f_{,1} + V^2 f_{,2} + V^3 f_{,3} + 2\partial_3 V^1 + 2 f\partial_3 V^3.
\end{align*}
\begin{definition}
For a smooth function $\varphi \in C^\infty(M)$, the Laplace--Beltrami operator is given by
\[
\Delta_g \varphi \;=\; \operatorname{div}_g(\nabla \varphi)
\;=\; g^{ij}\nabla_i\nabla_j \varphi
\;=\; \frac{1}{\sqrt{|\det g|}}\,
\partial_i\!\Big(\sqrt{|\det g|}\, g^{ij}\,\partial_j \varphi\Big).
\]
\end{definition}
With the Walker metric given in \eqref{Metric1}, 
we have $\det g=-\varepsilon$ so $\sqrt{|\det g|}=1$ ( since $\varepsilon=\pm1$). Therefore, we have
\[
\Delta_g \mathcal{F}
= \partial_i\!\big(g^{ij}\,\mathcal{F}_{,j}\big)
= \partial_1\!\big(-f\,\mathcal{F}_{,1}+\mathcal{F}_{,3}\big)
+ \partial_2\!\Big(\varepsilon\,\mathcal{F}_{,2}\Big)
+ \partial_3\!\big(\mathcal{F}_{,1}\big),
\]
from which, in developing, we obtain
\begin{align*}
\Delta_g \mathcal{F}
&= -\,f\,\mathcal{F}_{,11} \;-\; f_{,1}\,\mathcal{F}_{,1}
\;+\; \varepsilon\,\mathcal{F}_{,22}
\;+\; 2\,\mathcal{F}_{,13}.
\end{align*}
\begin{remark}
   We obtain the same expression by taking the trace of the Hessian tensor:
\(
\Delta_g \mathcal{F} = g^{ij}(\nabla^2\mathcal{F})_{ij}.
\)
\end{remark}
\subsection{Ricci-Yamabe soliton}
A Ricci-Yamabe soliton on a pseudo-Riemannian manifold $(M, g)$ is a solution to the following equation:
\begin{eqnarray}\label{YS}
2\beta Ric + \mathcal{L}_V g = (-2\lambda + \mu Scal) g \end{eqnarray}
where $Ric$ is the Ricci tensor, $Scal$  is the scalar curvature, $\mathcal{L}_V g$ is the Lie derivative of the metric $g$ with respect to a vector field $V$, and $\beta, \lambda, \mu$ are real constants.\\ The soliton is said gradient if $V = \nabla \mathcal{F}$ for a function $\mathcal{F} \in C^\infty(M)$. In this case the equation \eqref{YS} becomes
\begin{eqnarray}\label{YS1}
2\beta Ric + 2\nabla^2 \mathcal{F} = (-2\lambda + \mu Scal). g \end{eqnarray}
 The soliton $(M,g,V, \beta, \mu)$ will be called expanding, steady or shrinking if
 $\lambda<0$,$\lambda=0$ or $\lambda>0$, respectively.\\
 On the other hand, given a vector field $V$ over a compact oriented
 Riemannian manifold $(M^n, g)$ the Hodge-de Rham decomposition theorem, \cite{Hodge}, shows that we may decompose $V$ as the sum of a divergence free
 vector field $Y$ and the gradient of a function $\mathcal{F}$, then we set
\begin{eqnarray}\label{Hodge} 
V =Y +\nabla\mathcal{F},
 \end{eqnarray}
 where $\operatorname{div} (Y)=0.$
\section{Main Results}
\label{sec:resultats}
Let $(M, g)$ be a Walker 3-manifold, $\mathcal{F}$ be a smooth function and $Y\in\mathcal{X}(M)$. First we give a classification of Ricci-Yamabe soliton and secondly we give the study of gradient Rici-Yamabe soliton on the Walker 3-manifold $(M, g)$.
\begin{theorem}
\label{thm:main_theorem}
 The manifold 
$(M, g, \nabla \mathcal{F}+Y,\beta,\lambda,\mu)$ with  \(\operatorname{div}(Y)=0\) is Ricci-Yamabe soliton if and only if  $\Delta_g\mathcal{F}=3\left[-\lambda+\left(-\beta+\frac{\mu}{2}\right) \operatorname{Scal}\right]$.
\end{theorem}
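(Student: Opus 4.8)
The plan is to take the trace of the Ricci--Yamabe soliton equation \eqref{YS} with respect to the metric $g$. Contracting both sides of
\[
2\beta\,\mathrm{Ric} + \mathcal{L}_V g = (-2\lambda + \mu\,\mathrm{Scal})\,g
\]
with $g^{ij}$ yields, term by term: $2\beta\, g^{ij}\mathrm{Ric}_{ij} = 2\beta\,\mathrm{Scal}$ on the first summand; $g^{ij}(\mathcal{L}_V g)_{ij} = 2\,\operatorname{div}(V)$ on the second (this is the standard identity $g^{ij}(\mathcal{L}_V g)_{ij}=2\operatorname{div} V$, which one can also verify directly from the componentwise formulas for $(\mathcal{L}_V g)_{ij}$ listed above together with the inverse metric $g^{ij}$ in \eqref{Metric1}); and $g^{ij}\big((-2\lambda+\mu\,\mathrm{Scal})g_{ij}\big) = (-2\lambda+\mu\,\mathrm{Scal})\cdot n = 3(-2\lambda+\mu\,\mathrm{Scal})$ since $\dim M = 3$. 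Combining these gives
\[
2\beta\,\mathrm{Scal} + 2\,\operatorname{div}(V) = 3(-2\lambda + \mu\,\mathrm{Scal}),
\]
hence $\operatorname{div}(V) = -3\lambda + \tfrac{3\mu}{2}\,\mathrm{Scal} - \beta\,\mathrm{Scal} = 3\big[-\lambda + (-\beta+\tfrac{\mu}{2})\,\mathrm{Scal}\big]$.

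Next I would insert the Hodge--de Rham decomposition $V = Y + \nabla\mathcal{F}$ from \eqref{Hodge}. Since $\operatorname{div}$ is linear and $\operatorname{div}(Y)=0$ by hypothesis, we get $\operatorname{div}(V) = \operatorname{div}(\nabla\mathcal{F}) = \Delta_g\mathcal{F}$, the last equality being the definition of the Laplace--Beltrami operator recalled above. Substituting into the displayed identity gives exactly
\[
\Delta_g\mathcal{F} = 3\Big[-\lambda + \big(-\beta+\tfrac{\mu}{2}\big)\,\mathrm{Scal}\Big],
\]
which is the asserted necessary condition. For the converse direction one notes that the argument is reversible only at the level of the \emph{trace} of the soliton equation; so strictly speaking the ``if and only if'' should be read as: the trace of the Ricci--Yamabe soliton equation holds if and only if this Laplacian identity holds. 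I would make this precise by observing that taking the metric trace is the only operation used, so the equivalence is between the scalar equation obtained by contracting \eqref{YS} and the stated condition on $\Delta_g\mathcal{F}$.

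The only genuinely delicate point is the identity $g^{ij}(\mathcal{L}_V g)_{ij} = 2\operatorname{div}(V)$ in the pseudo-Riemannian setting; rather than invoke it abstractly, I would verify it in the Walker coordinates by pairing the six components $(\mathcal{L}_V g)_{ij}$ given above with the entries of $g^{ij}$ from \eqref{Metric1} and checking that the result collapses to $2(\partial_1 V^1 + \partial_2 V^2 + \partial_3 V^3)$ up to the Christoffel corrections that reconstitute $\operatorname{div}(V) = \partial_k V^k + \Gamma^k_{k\ell}V^\ell$; since the only nonzero contracted Christoffel symbol here is governed by \eqref{Chris2}, this is a short computation. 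Everything else is bookkeeping.
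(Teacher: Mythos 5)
Your proposal is correct and follows essentially the same route as the paper: both arguments take the metric trace of the soliton equation \eqref{YS}, use $g^{ij}(\mathcal{L}_V g)_{ij}=2\operatorname{div}(V)$ together with $\operatorname{div}(Y)=0$, and identify $\operatorname{div}(\nabla\mathcal{F})$ with $\Delta_g\mathcal{F}$ in dimension $3$. Your caveat that only the ``only if'' direction is genuinely established, while the converse recovers merely the traced scalar equation rather than the full tensor equation, is a real issue that the paper's one-line proof silently glosses over, and it is worth retaining.
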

\begin{proof}
The proof of the theorem is based on the substitution of the vector field $V = \nabla \mathcal{F}+Y$ where $\operatorname{div}(Y)=0$ in the Ricci-Yamabe system of soliton equations. In other words, according to the Ricci-Yamabe Soliton equation, we have: \[\beta \operatorname{Scal}+\operatorname{tr}(\nabla^2\mathcal{F})=3\left(-\lambda+\frac{\mu}{2}\operatorname{Scal}\right),\]   which gives the  result : $\Delta_g\mathcal{F}=3\left(-\lambda+\left(-\beta+\frac{\mu}{2}\right) \operatorname{Scal}\right)$    .
\end{proof}
\begin{example}
With the metric \eqref{Metric}, we take  \(f(x,y,z)=yze^{-x}\) and the function \(\mathcal{F}(x,y,z)=\mathcal{Y}(y)e^{x+z}\), if \(\left(M,g,\nabla \mathcal{F}+Y,\beta,\lambda,\mu\right)\) is a Ricci-Yamabe soliton, with \(\operatorname{div}(Y)=0\) then we have  $$
\mathcal Y(y)=A\cos\big(\sqrt2\,y\big)+B\sin\big(\sqrt2\,y\big),
$$ or $$
\;\mathcal Y(y)=C e^{\sqrt2\,y}+D e^{-\sqrt2\,y}\;,
$$
with  $A,B,C,D$ some real constants. Indeed, since \(\Delta_g\mathcal{F}=3\left(\lambda-\left(\beta+\frac{\mu}{2}\right) \operatorname{Scal}\right)\) then \[\varepsilon\mathcal{F}_{,22}+2\mathcal{F}_{,13}=3\left(-\lambda+\left(\frac{\mu}{2}-\beta\right)yze^{-x}\right)\Leftrightarrow \varepsilon\mathcal{Y}''(y)e^{x+z}+2\mathcal{Y}(y)e^{x+z}=3\left(-\lambda+\left(\frac{\mu}{2}-\beta\right)yze^{-x}\right).\] \[\varepsilon\mathcal{Y}''(y)+2\mathcal{Y}(y)=3\left(-\lambda+\left(\frac{\mu}{2}-\beta\right)yze^{-x}\right)e^{-x-z}.\] This last one is true if \(\lambda=0\) and \(\mu=2\beta\) so the equation reduces to\[\mathcal{Y}''(y)+2\varepsilon\mathcal{Y}(y)=0.\] 
\begin{itemize}
        \item[i)]
For  $\varepsilon=+1$ :
   the equation becomes $\mathcal Y''+2\mathcal Y=0$. The characteristic polynomial is $r^2+2=0$ so $r=\pm i\sqrt2$. The general real solution is written as
$$
\mathcal Y(y)=A\cos\big(\sqrt2\,y\big)+B\sin\big(\sqrt2\,y\big),
$$
with $A,B\in\mathbb R$.

\item[ii)]  For $\varepsilon=-1$ :
   the equation becomes  $\mathcal Y''-2\mathcal Y=0$.  The characteristic polynomial is $r^2-2=0$  so we have $r=\pm\sqrt2$. The general solution is 
$$
\;\mathcal Y(y)=C e^{\sqrt2\,y}+D e^{-\sqrt2\,y}\;,
$$
with real constants $C, D$.
    \end{itemize}
\end{example}
\begin{theorem}\label{T2}
    For \(\mu\ne 0\), the manifold  \((M,g,V,\beta,\lambda, \mu)\) is Ricci-Yamabe soliton, where \(V(x,y,z)=V^1(x,y,z)\partial x+V^1(x,y,z)\partial y+V^3(x,y,z)\partial z\) if and only if  \[\begin{cases}
    V^1(x,y,z)=\xi(x,z)-\beta\left(\frac{-\varepsilon x^2\partial_{yy}a(y,z)+2x\partial_{y}b(y,z)}{\mu}+ c(y,z)\right)\\
   -\int_{y_0}^y\left(\left(\frac{-\varepsilon x^3}{3\mu}\partial_{ss }a(s,z)+\frac{\partial_sb(s,z)+\lambda}{\mu}x^2+xc(s,z)+v(s,z)\right)\partial_sa(s,z)\right)ds
   -\varepsilon \left(-x\partial_{z}a(y,z)+\int_{y_0}^y\partial_zb(s,z)ds\right)\\ 
   V^2(x,y,z)=\frac{1}{\varepsilon}\left(-x\partial_2a(y,z)+b(y,z)\right)\\
        V^3(x,y,z)=a(y,z)\\
f(x,y,z)=\frac{-\varepsilon x^3}{3\mu}\partial_{yy }a(y,z)+\frac{\partial_yb(y,z)+\lambda}{\mu}x^2+xc(y,z)+v(y,z).        
    \end{cases}\] where $a, b, c, v$ are smooth functions that depend only on the variables $y$ and $z$,
and $\xi$ is a smooth function that depends only on the variables $x$ and $z$.
Under the following constraints:
 \[\begin{cases}
        \beta\left(\frac{\varepsilon ff_{,11}-f_{,22}}{\varepsilon}\right) + (V^1f_{,1}+V^2f_{,2}+V^3f_{,3} + 2\partial_3 V^1 + 2f\partial_3 V^3) -2\partial_y V^2f=0\\
        2\beta(\frac{1}{2} f_{,11}) + \partial_1 V^1 + \partial_3 V^3 -2\partial_yV^2 =0
    \end{cases}.\]
\end{theorem}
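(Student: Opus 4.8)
The plan is to write the Ricci--Yamabe soliton equation \eqref{YS} out componentwise, using the explicit $\operatorname{Ric}_{ij}$, $\operatorname{Scal}=f_{,11}$, the metric \eqref{Metric1}, and the Lie derivative $(\mathcal L_Vg)_{ij}$ recorded in Section~\ref{sec:preliminaires}, and then to solve the resulting six scalar PDEs one at a time, in an order chosen so that each equation either determines one of the unknowns $V^3,V^2,f,V^1$ or collapses into one of the two stated constraints.

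First I would use the $(1,1)$ component: since $\operatorname{Ric}_{11}=0$ and $g_{11}=0$, it reduces to $\partial_1V^3=0$, hence $V^3=a(y,z)$. Next the $(1,2)$ component, with $\operatorname{Ric}_{12}=0=g_{12}$, reads $\varepsilon\,\partial_1V^2+\partial_2V^3=0$; integrating in $x$ gives $V^2=\tfrac1\varepsilon\bigl(-x\,\partial_2a(y,z)+b(y,z)\bigr)$ with $b$ a free function of $(y,z)$. Then the $(2,2)$ component, with $\operatorname{Ric}_{22}=0$ and $g_{22}=\varepsilon$, becomes $2\partial_2V^2=-2\lambda+\mu f_{,11}$; here the hypothesis $\mu\neq0$ is exactly what lets one solve for $f_{,11}$, and since $\partial_2V^2$ is affine in $x$, integrating twice in $x$ produces the stated cubic-in-$x$ formula for $f$, the two integration ``constants'' being the free functions $c(y,z)$ and $v(y,z)$. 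It is convenient to record at this point the identity $-2\lambda+\mu f_{,11}=2\partial_2V^2$ coming from $(2,2)$, since it simplifies the right-hand sides of the two remaining equations.

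Continuing, the $(2,3)$ component, with $g_{23}=0$ and $\operatorname{Ric}_{23}=\tfrac12 f_{,12}$, is $\beta f_{,12}+\partial_2V^1+f\,\partial_2V^3+\varepsilon\,\partial_3V^2=0$; everything except $\partial_2V^1$ is now known, so integrating in $y$ from $y_0$ to $y$ yields $V^1$ up to a free function $\xi(x,z)$, and substituting the explicit $f,V^2,V^3$ and carrying out the $y$-integration term by term gives precisely the displayed expression for $V^1$ (the $\beta f_{,12}$ term integrating to the algebraic $\beta(\cdots)$ part by the fundamental theorem of calculus, the $f\,\partial_2a$ term giving the remaining $\int_{y_0}^y$, and the $\varepsilon\,\partial_3V^2$ term the last piece). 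Finally the $(1,3)$ and $(3,3)$ components are not used to solve for anything: using $\operatorname{Ric}_{13}=\tfrac12 f_{,11}$, $g_{13}=1$, $\partial_1V^3=0$ and the identity $-2\lambda+\mu f_{,11}=2\partial_2V^2$, the $(1,3)$ equation collapses to $2\beta(\tfrac12 f_{,11})+\partial_1V^1+\partial_3V^3-2\partial_2V^2=0$, the second constraint; and using $\operatorname{Ric}_{33}=\tfrac{\varepsilon ff_{,11}-f_{,22}}{2\varepsilon}$, $g_{33}=f$ and the same identity, the $(3,3)$ equation collapses to $\beta\bigl(\tfrac{\varepsilon ff_{,11}-f_{,22}}{\varepsilon}\bigr)+(V^1f_{,1}+V^2f_{,2}+V^3f_{,3}+2\partial_3V^1+2f\partial_3V^3)-2f\,\partial_2V^2=0$, the first constraint. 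This proves the forward implication, and the converse is immediate: by construction any $V,f$ of the stated form satisfying the two constraints verifies all six components of \eqref{YS}.

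The steps are all elementary, each being a single-variable integration, so there is no genuine conceptual obstacle; the main care is needed in the $(2,3)$ step — expanding $f_{,12}$ from the cubic formula for $f$ and keeping track of which pieces of the $y$-integral are absorbed into $\xi(x,z)$ — and in handling the $\varepsilon=\pm1$ factors (so that $1/\varepsilon=\varepsilon$) consistently throughout.
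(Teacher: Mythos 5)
Your proposal is correct and follows essentially the same route as the paper: componentwise expansion of \eqref{YS}, solving $(1,1)$, $(1,2)$, $(2,2)$ (where $\mu\neq0$ is used to extract $f_{,11}$), and $(2,3)$ in that order for $V^3$, $V^2$, $f$, $V^1$, and then substituting $-2\lambda+\mu f_{,11}=2\partial_2V^2$ into $(1,3)$ and $(3,3)$ to obtain the two residual constraints. The bookkeeping you flag in the $(2,3)$ integration (absorbing the $y$-independent pieces into $\xi(x,z)$) is exactly what the paper does.
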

\begin{proof} 
Using the components of the Ricci tensor and the Lie derivative, and the equation in  \eqref{YS},  we obtain the following system.
\begin{flalign*}
\begin{cases}
 \partial_1 V^3 = 0 & (1,1) \\
 \varepsilon\partial 1V^2+\partial_2 V^3 = 0 & (1,2) \\
2\beta(\frac{1}{2} f_{,11}) + (\partial_1 V^1 + \partial_3 V^3 + f\partial_1 V^3) = -2\lambda + \mu Scal & (1,3) \\
2\varepsilon\partial_2 V^2 = (-2\lambda + \mu \operatorname{Scal})(\varepsilon) & (2,2) \\
2\beta(\frac{1}{2}f_{,12}) + (\partial_2V^1+ f\partial_2 V^3+\varepsilon\partial_3 V^2 ) = 0 & (2,3) \\
\beta\left(\frac{\varepsilon ff_{,11}-f_{,22}}{\varepsilon}\right) + (V^1f_{,1}+V^2f_{,2}+V^3f_{,3} + 2\partial_3 V^1 + 2f\partial_3 V^3) = (-2\lambda + \mu \operatorname{Scal})f & (3,3)
\end{cases}
\end{flalign*}
where the scalar curvature is given in \eqref{Scal1}.\\
The line 
\((1,1)\) shows that \(V^3(x,y,z)=a(y,z)\). \\
The line
\((1,2)\) implies that  \[V^2(x,y,z)=\frac{1}{\varepsilon}\left(-x\partial_2a(y,z)+b(y,z)\right).\] 
The line \((2,2)\) gives  \(\operatorname{Scal}=f_{,11}=2\frac{\partial yV^2+\lambda}{\mu}\Leftrightarrow f_{,11}=2\frac{-\varepsilon x\partial _{yy}a(y,z)+\partial_yb(y,z)+\lambda}{\mu}
\) \[f(x,y,z)=\frac{-\varepsilon x^3}{3\mu}\partial_{yy }a(y,z)+\frac{\partial_yb(y,z)+\lambda}{\mu}x^2+xc(y,z)+v(y,z).\]
The line \((2,3)\) is equivalent to : \[\partial_y V^1=-\beta\left(f_{xy}\right)-f\partial_yV^3-\varepsilon\partial_zV^2\]
\begin{align*}
   \Leftrightarrow \partial_yV^1&=-\beta\left(\frac{-\varepsilon x^2\partial_{yyy}a(y,z)+2x\partial_{yy}b(y,z)}{\mu}+\partial_y c(y,z)\right)\\
   &-\left(\frac{-\varepsilon x^3}{3\mu}\partial_{yy }a(y,z)+\frac{\partial_yb(y,z)+\lambda}{\mu}x^2+xc(y,z)+v(y,z)\right)\partial_ya(y,z) \\
   &-\varepsilon \left(-x\partial_{yz}a(y,z)+\partial_zb(y,z)\right).
\end{align*}
\begin{align*}
    V^1(x,y,z)&=\xi(x,z)-\beta\left(\frac{-\varepsilon x^2\partial_{yy}a(y,z)+2x\partial_{y}b(y,z)}{\mu}+ c(y,z)\right)\\
    &-\varepsilon \left(-x\partial_{z}a(y,z)+\int_{y_0}^y\partial_zb(s,z)ds\right)\\
   &-\int_{y_0}^y\left(\left(\frac{-\varepsilon x^3}{3\mu}\partial_{ss }a(s,z)+\frac{\partial_sb(s,z)+\lambda}{\mu}x^2+xc(s,z)+v(s,z)\right)\partial_sa(s,z)\right)ds.
\end{align*}
Finally the lines \((1,3)\) and \((3,3)\) are equivalent respectively to  \[2\beta(\frac{1}{2} f_{,11}) + (\partial_1 V^1 + \partial_3 V^3 + f\partial_1 V^3)-2\partial_yV^2 =0\] and \[\beta\left(\frac{\varepsilon ff_{,11}-f_{,22}}{\varepsilon}\right) + (V^1f_{,1}+V^2f_{,2}+V^3f_{,3} + 2\partial_3 V^1 + 2f\partial_3 V^3) -2\partial_y V^2f=0.\]
\end{proof}
\begin{example}\label{E1}
Using the Theorem \ref{T2}, we take the following functions : $a(y,z)=0$, $b(y,z)=y$, $c(y,z)=0$, $v(y,z)=0$, $\xi(x,z)=x$. For the function $f$ we take 
\[
f(x,y,z)=-\frac{\varepsilon x^3}{3\mu}\,\partial_{yy}a(y,z)
+\frac{\partial_yb(y,z)+\lambda}{\mu}x^2+xc(y,z)+v(y,z)
\]
By replacing the above functions, we obtain:
\[
f(x,y,z)=-\frac{\varepsilon x^3}{3\mu}(0)+\frac{\partial_y(y)+\lambda}{\mu}x^2+x(0)+0 = \frac{1+\lambda}{\mu}x^2.
\]
Furthermore, the components of the vector field $V=V^1(x,y,z)\partial_x+ V^2(x,y,z)\partial_y+ V^3(x,y,z)\partial_z$ are :
\[
V^1(x,y,z)=\xi(x,z)-\beta\left(\frac{-\varepsilon x^2 \partial_{yy}a+2x \partial_yb}{\mu}+c\right)
= x-\beta\left(\frac{-\varepsilon x^2(0)+2x(1)}{\mu}+0\right)
\]
\[
= x\left(1-\frac{2\beta}{\mu}\right).
\]
\[
V^2(x,y,z)=\frac{1}{\varepsilon}(-x\partial_ya+b) = \frac{1}{\varepsilon}(-x(0)+y)=\frac{y}{\varepsilon}.
\]
\[
V^3(x,y,z)=a(y,z)=0.
\]
Now let us verifying the constraints. For that we need the following derivatives:
\[
\partial_{xx}f = \partial_{xx}\left(\frac{1+\lambda}{\mu}x^2\right) = \frac{2(1+\lambda)}{\mu}, \quad \partial_{yy}f = 0.
\]
\[
\partial_x V^1 = \partial_x\left(x\left(1-\frac{2\beta}{\mu}\right)\right) = 1-\frac{2\beta}{\mu}, \quad \partial_y V^2 = \partial_y\left(\frac{y}{\varepsilon}\right) = \frac{1}{\varepsilon}.
\]
The other derivatives are zero because the functions do not depend on the corresponding variables.\\
\textbf{First constraint}
\[
\beta\left(\frac{\varepsilon f\partial_{xx}f-\partial_{yy}f}{\varepsilon}\right) + (V^1\partial_xf+V^2\partial_yf+V^3\partial_zf + 2\partial_z V^1 + 2f\partial_z V^3) -2\partial_y V^2 f=0.
\]
By substituting the general expressions:
\begin{align*}
\text{Term 1: } & \beta\left(\frac{\varepsilon \left(\frac{1+\lambda}{\mu}x^2\right)\left(\frac{2(1+\lambda)}{\mu}\right)-0}{\varepsilon}\right) = \frac{2\beta(1+\lambda)^2}{\mu^2}x^2 \\
\text{Term 2: } & \left(x\left(1-\frac{2\beta}{\mu}\right)\left(\frac{2(1+\lambda)}{\mu}x\right)+0+0 + 0 + 0\right) = \frac{2(1+\lambda)}{\mu}\left(1-\frac{2\beta}{\mu}\right)x^2 \\
\text{Term 3: } & -2\left(\frac{1}{\varepsilon}\right)\left(\frac{1+\lambda}{\mu}x^2\right) = -\frac{2(1+\lambda)}{\varepsilon\mu}x^2
\end{align*}
The equation becomes :
\[
\frac{2\beta(1+\lambda)^2}{\mu^2}x^2 + \frac{2(1+\lambda)}{\mu}\left(1-\frac{2\beta}{\mu}\right)x^2 - \frac{2(1+\lambda)}{\varepsilon\mu}x^2 = 0.
\]
What leads to :
\[
2\varepsilon\beta(1+\lambda)^2
+ 2\varepsilon\mu(1+\lambda)\left(1-\frac{2\beta}{\mu}\right)
- 2\mu(1+\lambda) = 0.
\]
\textbf{Second constraint}
\[
2\beta\left(\frac{1}{2}\partial_{xx}f\right)+(\partial_x V^1+\partial_z V^3+f\partial_x V^3) -2\partial_yV^2 =0.
\]
By substituting the general expressions we get :
\[
2\beta\left(\frac{1}{2}\left(\frac{2(1+\lambda)}{\mu}\right)\right) + \left(\left(1-\frac{2\beta}{\mu}\right)+0+0\right) -2\left(\frac{1}{\varepsilon}\right)=0, 
\]
\[
\frac{2\beta(1+\lambda)}{\mu} + 1-\frac{2\beta}{\mu} - \frac{2}{\varepsilon}=0.
\]
We conclude that the Theorem \ref{T2} remains true for the previous choices, provided that the constants meet the following two conditions :
\[
2\varepsilon\beta(1+\lambda)^2\,
+ 2\varepsilon\mu(1+\lambda)\left(1-\frac{2\beta}{\mu}\right)
- 2\mu(1+\lambda) = 0,
\]
and 
\[
\frac{2\beta(1+\lambda)}{\mu} + 1-\frac{2\beta}{\mu} - \frac{2}{\varepsilon}=0.
\]
\end{example}
\begin{remark}
For $\varepsilon=\lambda=\beta=1$ and $\mu=2$, both constraints in Example \ref{E1} are satisfied.
\end{remark}
\begin{theorem}\label{T3}
   The manifold  \((M,g, V,\beta,\lambda,0)\) is a Ricci-Yamabe soliton if and only if the components of the vector field are : \[\begin{cases}
    V^1(x,y,z)=-\beta \partial_{x}f(x,y,z)-yx\partial_z\mathcal{Z}_1(z)-x\partial_z\mathcal{Z}_2(z)-2\lambda x+\xi(y,z)\\
    V^2(x,y,z)=-\varepsilon\mathcal{Z}_1(z)x-\lambda y +\mathcal{Z}_3(z)\\
    V^3(x,y,z)=\mathcal{Z}_1(z) y+\mathcal{Z}_2(z)
\end{cases},\]  where the functions \(\mathcal{Z}_{k\in\{1,2,3,4\}}\) are smooth functions depending only on the variable \(z\), \(\xi\) is smooth function depending only on the variables \(y\), \(z\) and \[\begin{cases}
    -2x\partial_z\mathcal{Z}_1(z)+\partial_y\xi(y,z)+f(x,y,z) \mathcal{Z}_1(z)+\varepsilon\partial_z\mathcal{Z}_3(z)=0\\
    \beta\left(\frac{\varepsilon ff_{,11}-f_{,22}}{\varepsilon}\right) + (V^1f_{,1}+V^2f_{,2}+V^3f_{,3} + 2\partial_3 V^1 + 2f\left(\partial_3 V^3 +\lambda \right)=0.
\end{cases}\]
\end{theorem}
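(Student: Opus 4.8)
The plan is to run the same argument as in the proof of Theorem \ref{T2}, now with $\mu=0$. First I substitute the general field $V=V^1\partial_x+V^2\partial_y+V^3\partial_z$ into the Ricci--Yamabe equation \eqref{YS}, using the components of $\operatorname{Ric}$ and of $\mathcal L_V g$ computed in Section \ref{sec:preliminaires} together with $\operatorname{Scal}=f_{,11}$ from \eqref{Scal1}. Since $\mu=0$ the right-hand side $(-2\lambda+\mu\operatorname{Scal})g$ collapses to $-2\lambda g$, and the six scalar equations indexed by $(i,j)$ become
\[
\begin{cases}
\partial_1 V^3=0 & (1,1)\\
\varepsilon\,\partial_1 V^2+\partial_2 V^3=0 & (1,2)\\
\beta f_{,11}+\partial_1 V^1+\partial_3 V^3+f\,\partial_1 V^3=-2\lambda & (1,3)\\
2\varepsilon\,\partial_2 V^2=-2\lambda\varepsilon & (2,2)\\
\beta f_{,12}+\partial_2 V^1+f\,\partial_2 V^3+\varepsilon\,\partial_3 V^2=0 & (2,3)\\
\beta\!\left(\dfrac{\varepsilon f f_{,11}-f_{,22}}{\varepsilon}\right)+V^1 f_{,1}+V^2 f_{,2}+V^3 f_{,3}+2\partial_3 V^1+2f\,\partial_3 V^3=-2\lambda f & (3,3).
\end{cases}
\]

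Next I solve this system in the order $(1,1)$, $(2,2)$, $(1,2)$, $(1,3)$. Line $(1,1)$ gives $V^3=V^3(y,z)$; line $(2,2)$ gives $\partial_y V^2=-\lambda$, hence $V^2=-\lambda y+h(x,z)$ for a smooth $h$. Cross-differentiating $(1,2)$ and $(2,2)$ shows that $\partial_x V^2=-\tfrac1\varepsilon\,\partial_y V^3$ must be independent of $y$, so $\partial_y V^3$ depends on $z$ only and $V^3=\mathcal Z_1(z)\,y+\mathcal Z_2(z)$; then $(1,2)$ forces $\partial_x V^2=-\tfrac1\varepsilon\mathcal Z_1(z)$, and since $\tfrac1\varepsilon=\varepsilon$, integrating in $x$ yields $V^2=-\varepsilon\mathcal Z_1(z)\,x-\lambda y+\mathcal Z_3(z)$. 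Finally $(1,3)$, after using $\partial_1 V^3=0$, reads $\partial_x V^1=-2\lambda-\beta f_{,11}-\partial_3 V^3=-2\lambda-\beta f_{,11}-y\,\partial_z\mathcal Z_1(z)-\partial_z\mathcal Z_2(z)$, and integrating in $x$ (with $\int f_{,11}\,dx=f_{,1}$) gives the asserted $V^1=-\beta\,\partial_x f-yx\,\partial_z\mathcal Z_1(z)-x\,\partial_z\mathcal Z_2(z)-2\lambda x+\xi(y,z)$, where $\xi=\xi(y,z)$ is the integration function.

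It then remains to feed these three expressions into the last two lines. In $(2,3)$ the explicit term $\beta f_{,12}$ is cancelled by the $-\beta f_{,12}$ arising when $\partial_y$ hits the term $-\beta\,\partial_x f$ inside $V^1$, and what survives is precisely $-2x\,\partial_z\mathcal Z_1(z)+\partial_y\xi(y,z)+f\,\mathcal Z_1(z)+\varepsilon\,\partial_z\mathcal Z_3(z)=0$, the first constraint. In $(3,3)$, moving $-2\lambda f$ to the left-hand side and absorbing it into the term $2f\,\partial_3 V^3$ turns the equation into $\beta\big(\tfrac{\varepsilon f f_{,11}-f_{,22}}{\varepsilon}\big)+V^1 f_{,1}+V^2 f_{,2}+V^3 f_{,3}+2\partial_3 V^1+2f(\partial_3 V^3+\lambda)=0$, the second constraint. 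For the converse it suffices to observe that, given $V^1,V^2,V^3$ of the stated form and the two displayed constraints, every line of the system above holds, so $(M,g,V,\beta,\lambda,0)$ is a Ricci--Yamabe soliton.

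The only non-mechanical point is the compatibility step in the middle paragraph: one must combine $(1,1)$, $(1,2)$ and $(2,2)$ to conclude that $V^3$ is affine in $y$ — equivalently, that $\partial_x V^2$ carries no $y$-dependence — which is exactly what introduces the three $z$-functions $\mathcal Z_1,\mathcal Z_2,\mathcal Z_3$. All the remaining work, namely the successive one-variable integrations, the identity $\tfrac1\varepsilon=\varepsilon$, the cancellation of the $\beta f_{,12}$ terms in $(2,3)$, and the regrouping of $-2\lambda f$ in $(3,3)$, is routine.
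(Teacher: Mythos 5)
Your proposal is correct and follows essentially the same route as the paper: write out the six component equations of \eqref{YS} with $\mu=0$, integrate $(1,1)$, $(2,2)$, $(1,2)$, $(1,3)$ successively to obtain the stated forms of $V^3$, $V^2$, $V^1$ (the paper reaches the affine-in-$y$ form of $V^3$ by first integrating $(\ell_2)$ in $x$ and then imposing $(\ell_4)$, which is equivalent to your cross-differentiation step), and record $(2,3)$ and $(3,3)$ as the two residual constraints after the $\beta f_{,12}$ cancellation and the regrouping of $-2\lambda f$.
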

\begin{proof}
    The equation \eqref{YS} gives the following system : \begin{flalign*}
\begin{cases}
 \partial_1 V^3 = 0 &( \ell_1 )\\
 \varepsilon\partial 1V^2+\partial_2 V^3 = 0 & (\ell_2) \\
2\beta(\frac{1}{2} f_{,11}) + (\partial_1 V^1 + \partial_3 V^3 + f\partial_1 V^3) = -2\lambda  & (\ell_3)\\
\partial_2 V^2 = -\lambda  & (\ell_4)\\
2\beta(\frac{1}{2}f_{,12}) + (\partial_2V^1+ f\partial_2 V^3+\varepsilon\partial_3 V^2 ) = 0 & (\ell_5) \\
\beta\left(\frac{\varepsilon ff_{,11}-f_{,22}}{\varepsilon}\right) + (V^1f_{,1}+V^2f_{,2}+V^3f_{,3} + 2\partial_3 V^1 + 2f\partial_3 V^3) = -2\lambda f & (\ell_6)
\end{cases}
\end{flalign*}
The first line \((\ell_1)\) implies \(V^3(x,y,z)=a(y,z)\) and the second \((\ell_2)\) gives \(V^2(x,y,z)=-\varepsilon  x\partial ya(y,z)+b(y,z)\). \\
The line \((\ell_4)\) shows that \[\begin{cases}
    \partial_{yy} a(y,z)=0\\
    \partial_y b(y,z)=-\lambda
\end{cases}\Leftrightarrow\begin{cases}
    a(y,z)=\mathcal{Z}_1(z)y+ \mathcal{Z}_2(z)\\
    b(y,z)=-\lambda y+\mathcal{Z}_3(z)
\end{cases}\] so we have \[\begin{cases}
    V^2(x,y,z)=-\varepsilon\mathcal{Z}_1(z)x-\lambda y +\mathcal{Z}_3(z)\\
    V^3(x,y,z)=\mathcal{Z}_1(z) y+\mathcal{Z}_2(z).
\end{cases}.\]
The line \((\ell_3)\) implies \(\partial_xV^1(x,y,z)=-\beta \partial_{xx}f(x,y,z)-y\partial_z\mathcal{Z}_1(z)-\partial_z\mathcal{Z}_2(z)-2\lambda\) \[\Leftrightarrow V^1(x,y,z)=-\beta \partial_{x}f(x,y,z)-yx\partial_z\mathcal{Z}_1(z)-x\partial_z\mathcal{Z}_2(z)-2\lambda x+\xi(y,z).\]
The line \((\ell_5)\) gives \[-2x\partial_z\mathcal{Z}_1(z)+\partial_y\xi(y,z)+f(x,y,z) \mathcal{Z}_1(z)+\varepsilon\partial_z\mathcal{Z}_3(z)=0.\]
\end{proof}
\begin{remark}
In the Theorem \ref{T3}, if there is a smooth function $\varphi$ depending only on the \(y\) and \(z\) such that \(f:=\varphi\) then the function \(\mathcal{Z}_1\) becomes a constant \(k\) and \[V^1(x,y,z)=-x\mathcal{Z}'_2(z)-2\lambda x -k\int_{y_0}^y\varphi(s,z)ds-\varepsilon y\mathcal{Z}_3'(z)+\mathcal{Z}_4(z).\] 
\end{remark}
\begin{corollary}\label{C1}
   In the Theorem \ref{T3}, if \(\mathcal{Z}_1\) is a zero  function then the components of the vector field \(V\) are : \[\begin{cases}
    V^1(x,y,z)=-\beta \partial_{x}f(x,y,z)-x\mathcal{Z}'_2(z)-2\lambda x-\varepsilon y\mathcal{Z}'_3(z)+\mathcal{Z}_4(z)\\
    V^2(x,y,z)=-\lambda y +\mathcal{Z}_3(z)\\
    V^3(x,y,z)=\mathcal{Z}_2(z)
\end{cases}.\]  and the function  \(f\) is solution of the following equation \[\beta\left(\frac{\varepsilon ff_{,11}-f_{,22}}{\varepsilon}\right) + (V^1f_{,1}+V^2f_{,2}+V^3f_{,3} + 2\partial_3 V^1 + 2f\left(\partial_3 V^3 +\lambda \right)=0.\]
\end{corollary}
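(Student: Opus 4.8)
The plan is to obtain the corollary as a direct specialisation of Theorem \ref{T3} to the case $\mathcal{Z}_1\equiv 0$, supplemented by an explicit resolution of the first of the two residual constraints. First I would start from the general solution produced in the proof of Theorem \ref{T3}, namely the components
\[
V^3=\mathcal{Z}_1(z)y+\mathcal{Z}_2(z),\qquad V^2=-\varepsilon\mathcal{Z}_1(z)x-\lambda y+\mathcal{Z}_3(z),
\]
\[
V^1=-\beta\,\partial_x f-yx\,\mathcal{Z}_1'(z)-x\,\mathcal{Z}_2'(z)-2\lambda x+\xi(y,z),
\]
the constraint issued from line $(\ell_5)$,
\[
-2x\,\mathcal{Z}_1'(z)+\partial_y\xi(y,z)+f(x,y,z)\,\mathcal{Z}_1(z)+\varepsilon\,\mathcal{Z}_3'(z)=0,
\]
and the constraint issued from line $(\ell_6)$. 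Imposing $\mathcal{Z}_1\equiv 0$ at once gives $V^3=\mathcal{Z}_2(z)$ and $V^2=-\lambda y+\mathcal{Z}_3(z)$, while the term $-yx\,\mathcal{Z}_1'(z)$ drops out of $V^1$, leaving $V^1=-\beta\,\partial_x f-x\,\mathcal{Z}_2'(z)-2\lambda x+\xi(y,z)$.

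Next I would resolve the surviving form of the $(\ell_5)$--constraint. When $\mathcal{Z}_1\equiv 0$ both the term $-2x\,\mathcal{Z}_1'(z)$ and the term $f\,\mathcal{Z}_1$ vanish identically, so the constraint collapses to $\partial_y\xi(y,z)=-\varepsilon\,\mathcal{Z}_3'(z)$. Integrating in the variable $y$ yields $\xi(y,z)=-\varepsilon y\,\mathcal{Z}_3'(z)+\mathcal{Z}_4(z)$ for a smooth function $\mathcal{Z}_4$ depending on $z$ alone; this is precisely how the fourth function of the statement appears. Substituting this $\xi$ back into the expression for $V^1$ produces the claimed formula
\[
V^1=-\beta\,\partial_x f-x\,\mathcal{Z}_2'(z)-2\lambda x-\varepsilon y\,\mathcal{Z}_3'(z)+\mathcal{Z}_4(z),
\]
and the formulas for $V^2$ and $V^3$ are already in the stated form.

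Finally, the only equation among the six of the system that has not been used to pin down $V^1,V^2,V^3$ and $\xi$ is line $(\ell_6)$; moving $-2\lambda f$ to the left-hand side exactly as in the proof of Theorem \ref{T3} turns it into the stated PDE for $f$. For the converse direction one simply reverses the substitutions: a vector field $V$ and a function $f$ of the displayed form satisfy $(\ell_1)$--$(\ell_4)$ by inspection, satisfy $(\ell_5)$ because $\partial_y\xi=-\varepsilon\,\mathcal{Z}_3'$, satisfy $(\ell_3)$ by the way $V^1$ was constructed, and satisfy $(\ell_6)$ by the imposed PDE, so $(M,g,V,\beta,\lambda,0)$ is a solution of \eqref{YS}. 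Since the whole argument is a pure specialisation of Theorem \ref{T3}, there is no genuine obstacle; the single point requiring a little care is the reduction of $(\ell_5)$ — observing that with $\mathcal{Z}_1\equiv 0$ the $f$-dependent term disappears, so $\xi$ is determined by an elementary integration in $y$ and no hidden compatibility condition on $f$ is generated at that step.
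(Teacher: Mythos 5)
Your proposal is correct and follows exactly the route the paper intends: the corollary is a direct specialisation of Theorem \ref{T3} to $\mathcal{Z}_1\equiv 0$, with the $(\ell_5)$-constraint collapsing to $\partial_y\xi=-\varepsilon\,\mathcal{Z}_3'(z)$ and integrating to $\xi=-\varepsilon y\,\mathcal{Z}_3'(z)+\mathcal{Z}_4(z)$, which is precisely where the function $\mathcal{Z}_4$ in the statement originates, while the $(\ell_6)$ equation survives as the stated PDE on $f$. The paper offers no separate proof of the corollary, so your explicit treatment of the $\xi$-integration and the verification of the converse direction is, if anything, slightly more complete than the source.
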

\begin{example}
   In the Corollary \ref{C1}, we take   \(f(x,y,z)=\mathcal{
    Z}_5(z)\), \(\lambda=0\) \(\mathcal{Z}_3(z)=az+b\) et \(\mathcal{Z}_4(z)=c\in\mathbb{R}\).
    The constraint becomes 
    
    \[\mathcal{
    Z}_2(z)\mathcal{
    Z}_5'(z)-2x\mathcal{
    Z}''_2(z)+\mathcal{
    Z}_5(z)\mathcal{
    Z}'_2(z)=0\] \[\begin{cases}
        \mathcal{
    Z}_2(z)=a_1z+a_2\\
    \mathcal{
    Z}_5(z)=\varepsilon\left(a_1z+a_2\right)^{-1}.
    \end{cases}\]
    The Corollary \ref{C1} is true if \(\mathcal{
    Z}_5(z)=\varepsilon\left(a_1z+a_2\right)^{-1}\) is well defined.
\end{example}
\begin{corollary}\label{C2}
 If we consider that the function $\mathcal{Z}_1$ is a nonzero smooth function then \[
f(x,y,z) = \frac{2x\,\partial_z \mathcal{Z}_1(z) - \partial_y \xi(y,z) - \varepsilon\,\partial_z \mathcal{Z}_3(z)}{\mathcal{Z}_1(z)}
\] and \[V^1(x,y,z)=-\beta \frac{2\partial_z\mathcal{Z}_1(z)}{\mathcal{Z}_1(z)}-yx\partial_z\mathcal{Z}_1(z)-x\partial_z\mathcal{Z}_2(z)-2\lambda x+\xi(y,z),\]
with the constraint \begin{eqnarray*}\beta\varepsilon \frac{\partial_{yyy}\xi(y,z)}{\mathcal{Z}_1(z)}+V^1(x,y,z)\frac{\mathcal{Z}_1'(z)}{\mathcal{Z}_1(z)}-V^2(x,y,z)\left(\frac{\partial_{yy}\xi(y,z)}{\mathcal{Z}_1(z)}\right)\\+V^3(x,y,z)\partial_z f(x,y,z)+ 2\partial_3 V^1 + 2f\left(\partial_3 V^3 +\lambda \right)=0.\end{eqnarray*}
\end{corollary}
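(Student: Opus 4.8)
The plan is to obtain Corollary~\ref{C2} as a direct specialization of Theorem~\ref{T3} to the subcase in which the $z$-function $\mathcal{Z}_1$ is nowhere vanishing, so that the first constraint of that theorem can be solved explicitly for $f$. Recall that Theorem~\ref{T3} already supplied the components $V^2$, $V^3$ and the expression $V^1 = -\beta\,\partial_x f - yx\,\partial_z\mathcal{Z}_1 - x\,\partial_z\mathcal{Z}_2 - 2\lambda x + \xi$, together with the two constraints coming from the lines $(\ell_5)$ and $(\ell_6)$ of the Ricci--Yamabe system. So nothing new has to be re-derived from scratch; it is only a matter of exploiting the extra hypothesis $\mathcal{Z}_1\neq 0$.

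The first step is to reinterpret the constraint produced by $(\ell_5)$, namely $-2x\,\partial_z\mathcal{Z}_1 + \partial_y\xi + f\,\mathcal{Z}_1 + \varepsilon\,\partial_z\mathcal{Z}_3 = 0$, as an equation that \emph{determines} $f$. Dividing by $\mathcal{Z}_1(z)$ (legitimate everywhere under the standing hypothesis) yields
\[
f(x,y,z) = \frac{2x\,\partial_z\mathcal{Z}_1(z) - \partial_y\xi(y,z) - \varepsilon\,\partial_z\mathcal{Z}_3(z)}{\mathcal{Z}_1(z)},
\]
which is the first assertion of the corollary. Note that this $f$ is affine in $x$, with leading coefficient $2\,\partial_z\mathcal{Z}_1(z)/\mathcal{Z}_1(z)$; hence $\partial_x f = 2\,\partial_z\mathcal{Z}_1(z)/\mathcal{Z}_1(z)$, $f_{,11}=0$, $f_{,2} = -\partial_{yy}\xi(y,z)/\mathcal{Z}_1(z)$ and $f_{,22} = -\partial_{yyy}\xi(y,z)/\mathcal{Z}_1(z)$.

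The second step is to substitute $\partial_x f$ into the formula for $V^1$ coming from Theorem~\ref{T3}, which turns $-\beta\,\partial_x f$ into $-\beta\,\big(2\,\partial_z\mathcal{Z}_1(z)/\mathcal{Z}_1(z)\big)$ and gives the stated expression for $V^1$ (while $V^2$ and $V^3$ are copied verbatim from the theorem). Finally, I would plug $f$ and the derivatives listed above into the remaining constraint of Theorem~\ref{T3}, i.e. $\beta\big(\varepsilon f f_{,11} - f_{,22}\big)/\varepsilon + V^1 f_{,1} + V^2 f_{,2} + V^3 f_{,3} + 2\,\partial_3 V^1 + 2f(\partial_3 V^3 + \lambda) = 0$; using $f_{,11}=0$ together with the identity $1/\varepsilon = \varepsilon$ (valid since $\varepsilon=\pm1$), the curvature term collapses to $\beta\varepsilon\,\partial_{yyy}\xi(y,z)/\mathcal{Z}_1(z)$, and collecting the remaining terms reproduces exactly the constraint displayed in the corollary.

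I do not anticipate a genuine obstacle: the whole argument is a specialization followed by elementary bookkeeping. The two points that need attention are (i) reading ``nonzero smooth function'' as ``nowhere vanishing,'' which is what makes the division by $\mathcal{Z}_1$ and hence the explicit solution for $f$ meaningful on all of $M$ (on a manifold where $\mathcal{Z}_1$ has zeros one restricts to the open set $\{\mathcal{Z}_1\neq0\}$), and (ii) keeping track of the numerical coefficients — in particular the factor $2$ coming from $\partial_x f$ — when rewriting $V^1$ and the term $V^1 f_{,1}$ in the final constraint.
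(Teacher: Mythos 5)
Your proposal is correct and follows exactly the route the paper intends: Corollary~\ref{C2} is obtained by solving the first constraint of Theorem~\ref{T3} for $f$ (legitimate since $\mathcal{Z}_1$ is nowhere vanishing) and substituting the resulting derivatives of $f$ into $V^1$ and into the second constraint. Your bookkeeping is in fact more careful than the printed statement: since $f_{,1}=2\,\partial_z\mathcal{Z}_1(z)/\mathcal{Z}_1(z)$, the term $V^1 f_{,1}$ should read $V^1\,\frac{2\mathcal{Z}_1'(z)}{\mathcal{Z}_1(z)}$, so the coefficient $\mathcal{Z}_1'(z)/\mathcal{Z}_1(z)$ in the corollary's displayed constraint is missing a factor of $2$ (a harmless discrepancy in the subsequent example, where $\mathcal{Z}_1\equiv 1$ and the term vanishes).
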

\begin{example}
 For the Corollary \ref{C2},
we fix 
\[
\varepsilon=1,\qquad \lambda=1,
\]
and the functions 
\[
\mathcal{Z}_1(z)=1,\qquad \mathcal{Z}_2(z)=0,\qquad \mathcal{Z}_3(z)=z^2.
\]
We choose an affine function $\xi$ on the variable $y$ :
\[
\xi(y,z)=a(z)\,y+b(z),\qquad a(z)=\mathcal{Z}_3'(z)=2z,\qquad b(z)=2z^2+b_0.
\]
Therefore we have 
\[
\xi(y,z)=2zy+2z^2+b_0.
\]
\textbf{Calculation of  $f$}\\
We have
\[
f(x,y,z)=\frac{2x\,\partial_z \mathcal{Z}_1(z) - \partial_y \xi(y,z) - \varepsilon\,\partial_z \mathcal{Z}_3(z)}{\mathcal{Z}_1(z)}.
\]
But 
\[
\partial_z \mathcal{Z}_1(z)=0,\qquad \partial_y \xi(y,z)=2z,\qquad \partial_z \mathcal{Z}_3(z)=2z,
\]
so we obtain
\[
f(x,y,z)=-2z-2z=-4z.
\]
\textbf{Calculation of the components of the vector field $V$}
\[
V^1(x,y,z)=-x\mathcal{Z}_2'(z)-2\lambda x+\xi(y,z)=-2x+2zy+2z^2+b_0,
\]
\[
V^2(x,y,z)=-\varepsilon \mathcal{Z}_1(z)x-\lambda y+\mathcal{Z}_3(z)=-x-y+z^2,
\]
\[
V^3(x,y,z)=\mathcal{Z}_1(z)y+\mathcal{Z}_2(z)=y.
\]
\textbf{Verification of the Constraint }\\
The constraint to verify is
\[
\beta\varepsilon \frac{\partial_{yyy}\xi}{\mathcal{Z}_1}
+V^1\frac{\mathcal{Z}_1'}{\mathcal{Z}_1}
-V^2\left(\frac{\partial_{yy}\xi}{\mathcal{Z}_1}\right)
+V^3\,\partial_z f
+2\,\partial_z V^1
+2f\big(\partial_z V^3+\lambda\big)=0.
\]
By computation we have 
\[
\partial_{yy}\xi=0,\qquad \partial_{yyy}\xi=0,
\]
\[
\mathcal{Z}_1'(z)=0,
\]
\[
\partial_z f(x,y,z)=\partial_z(-4z)=-4,
\]
\[
\partial_z V^1(x,y,z)=\partial_z(-2x+2zy+2z^2+b_0)=2y+4z,
\]
\[
\partial_z V^3(x,y,z)=\partial_z(y)=0.
\]
So the constraint is 
\[
0+0-0+V^3(-4)+2(2y+4z)+2(-4z)(0+1).
\]
By replacing $V^3=y$ :
\[
-4y+4y+8z-8z=0.
\]
Thus, for the choices
\[
\mathcal{Z}_1(z)=1,\quad \mathcal{Z}_2(z)=0,\quad \mathcal{Z}_3(z)=z^2,\quad 
\xi(y,z)=2zy+2z^2+b_0,\quad \varepsilon=\lambda=1,
\]
the constraint is exactly satisfied.
\end{example}
\begin{theorem}
There is a smooth nonzero vector field $V$ and a function $f$ that corresponds to the third line and third column of the metric (\ref{Metric}), such that $(M, g, V, \beta, \lambda, \mu)$ is a Ricci–Yamabe soliton under certain constant constraints $\beta$, $\lambda$, $\varepsilon$ and $\mu$.
\end{theorem}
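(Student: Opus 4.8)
The plan is to prove this existence statement constructively, by specializing the parametrization obtained in Theorem~\ref{T2}. Recall that for $\mu\neq 0$ that result expresses every Ricci-Yamabe soliton $(M,g,V,\beta,\lambda,\mu)$ of the given type in terms of free smooth functions $a,b,c,v$ of $(y,z)$ and $\xi$ of $(x,z)$, subject only to the two residual constraints displayed at the end of its statement. First I would choose these free data as simply as possible, namely $a\equiv 0$, $b(y,z)=y$, $c\equiv v\equiv 0$ and $\xi(x,z)=x$, exactly as in Example~\ref{E1}. With this choice the formulas of Theorem~\ref{T2} collapse to
\[
f(x,y,z)=\frac{1+\lambda}{\mu}\,x^{2},\qquad
V^{1}=x\Bigl(1-\tfrac{2\beta}{\mu}\Bigr),\qquad V^{2}=\frac{y}{\varepsilon},\qquad V^{3}=0,
\]
so that $f$ is smooth and sits precisely in the $(3,3)$-slot $g_{33}$ of the metric \eqref{Metric}, while $V$ is a smooth vector field with $\partial_y V^{2}=1/\varepsilon\neq 0$, hence not identically zero.

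Next I would impose the two residual constraints on this data. As already computed in Example~\ref{E1}, substituting the above $f$ and $V$ reduces them to the purely algebraic system
\[
2\varepsilon\beta(1+\lambda)^{2}+2\varepsilon\mu(1+\lambda)\Bigl(1-\tfrac{2\beta}{\mu}\Bigr)-2\mu(1+\lambda)=0,
\qquad
\frac{2\beta(1+\lambda)}{\mu}+1-\frac{2\beta}{\mu}-\frac{2}{\varepsilon}=0,
\]
two relations among the four constants $\beta,\lambda,\mu,\varepsilon$, with $\varepsilon=\pm1$ and $\mu\neq 0$. It then suffices to exhibit one admissible solution, and $\varepsilon=\lambda=\beta=1$, $\mu=2$ works, as recorded in the Remark after Example~\ref{E1}. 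For these values $f(x,y,z)=x^{2}$ (so $\operatorname{Scal}=f_{,11}=2\neq 0$, i.e.\ the example is genuinely curved) and $V=(0,\,y,\,0)\not\equiv 0$; the ``if'' direction of Theorem~\ref{T2} then certifies that $(M,g,V,1,1,2)$ is a Ricci-Yamabe soliton, which proves the claim.

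The step I expect to be the genuine obstacle is the passage right after the specialization: one must be sure that the two residual constraints are \emph{simultaneously} solvable within the admissible range ($\varepsilon=\pm1$, $\mu\neq 0$) while not forcing $V\equiv 0$ or a degenerate metric. Exhibiting the explicit point $(\beta,\lambda,\mu,\varepsilon)=(1,1,2,1)$ disposes of this, but one could equally analyze the two-equations-in-four-unknowns system directly to see that its real solution set is nonempty (and generically of positive dimension); as an alternative source of witnesses one may instead run the $\mu=0$ construction of Theorem~\ref{T3} and Corollary~\ref{C1}. Everything else --- smoothness of $f$ and $V$, the fact that $f$ is exactly the $g_{33}$ entry, and the verification of the soliton equation \eqref{YS} --- is then automatic from the results already established.
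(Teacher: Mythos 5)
Your proposal is correct and follows essentially the same route as the paper: the paper's proof consists of the single line ``the demonstration is based on the previous examples,'' and what you have written is precisely the detailed working-out of Example~\ref{E1} together with the subsequent Remark, culminating in the admissible witness $(\beta,\lambda,\mu,\varepsilon)=(1,1,2,1)$ with $f=x^{2}$ and $V=(0,y,0)$. Your version is in fact more complete than the paper's, since you explicitly check nontriviality of $V$ and nondegeneracy of the data, which the paper leaves implicit.
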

 \begin{proof}
    The demonstration is based on the previous examples.
 \end{proof}
 \begin{theorem}\label{t1}
 Let \(\mathcal
 F\) be a smooth function on \(M\) depending only on the variables \(y\) and \(z\). \((M,g,\nabla\mathcal{F}, \beta,\lambda,\mu)\) is a gradient Ricci-Yamabe soliton  for all constants \(\lambda\) and \(\beta\) non zero if and only if \(\beta\ne \mu\). Moreover \[
\begin{aligned}
\mathcal{F}(x,y,z)&=\frac{\varepsilon\lambda\beta}{4(\mu-\beta)}y^2+a\,y+b,\\[6pt]
f(x,y,z)&=\frac{\lambda}{\mu-\beta}x^2+R(z)x+C(z)\int \exp\left(\frac{a}{\beta}y + \frac{\lambda}{4\varepsilon(\mu-\beta)}y^2\right)dy + D(z).
\end{aligned}
\]
  where \(D\), \(C\), \(R\) depending uniquely on the variables \(z\) and \(a\), \(b\) are real constants.
     
 \end{theorem}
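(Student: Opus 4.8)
The plan is to substitute the gradient field $V=\nabla\mathcal{F}$, with $\mathcal{F}$ a function of $y$ and $z$ only (so that $\mathcal{F}_{,1}=0$), into the gradient Ricci--Yamabe equation \eqref{YS1}, i.e. $2\beta\,\operatorname{Ric}_{ij}+2(\nabla^2\mathcal{F})_{ij}=(-2\lambda+\mu\operatorname{Scal})g_{ij}$, and to read off the six component equations. Because $\mathcal{F}_{,1}=0$, the Hessian formulas of Section~\ref{sec:preliminaires} collapse to $(\nabla^2\mathcal{F})_{11}=(\nabla^2\mathcal{F})_{12}=(\nabla^2\mathcal{F})_{13}=0$, $(\nabla^2\mathcal{F})_{22}=\mathcal{F}_{,22}$, $(\nabla^2\mathcal{F})_{23}=\mathcal{F}_{,23}$ and $(\nabla^2\mathcal{F})_{33}=\mathcal{F}_{,33}+\tfrac{1}{2\varepsilon}f_{,2}\mathcal{F}_{,2}$; together with the Ricci tensor, the metric \eqref{Metric1}, and $\operatorname{Scal}=f_{,11}$, this turns the soliton condition into an explicit system.

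First I would dispatch the easy components: the $(1,1)$ and $(1,2)$ equations are identities. The $(1,3)$ equation reduces to $(\beta-\mu)f_{,11}=-2\lambda$; since $\lambda\neq 0$ this is solvable if and only if $\beta\neq\mu$, which is the announced dichotomy, and then $f_{,11}$ equals the constant $2\lambda/(\mu-\beta)$, so two integrations in $x$ give $f=\tfrac{\lambda}{\mu-\beta}x^2+P(y,z)\,x+Q(y,z)$. Feeding $\operatorname{Scal}=2\lambda/(\mu-\beta)$ back in, the right-hand scalar $-2\lambda+\mu\operatorname{Scal}$ becomes a constant, call it $c$, proportional to $\lambda\beta/(\mu-\beta)$. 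The $(2,2)$ equation then says $\mathcal{F}_{,22}=\tfrac12\varepsilon c$ is constant, so $\mathcal{F}=\kappa y^2+a(z)\,y+b(z)$ with $\kappa$ the coefficient displayed in the statement.

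Next, the $(2,3)$ equation reads $\beta f_{,12}+2\mathcal{F}_{,23}=0$, i.e. $\beta P_{,y}=-2a'(z)$ (here $\beta\neq 0$ is used); requiring the coefficient of $x$ in $f$ to depend on $z$ alone, as in the claimed form, forces $a'\equiv 0$, so $a$ is a constant and $P=R(z)$. It remains to extract $Q$ from the $(3,3)$ equation $2\beta\operatorname{Ric}_{33}+2(\nabla^2\mathcal{F})_{33}=cf$. Using $\operatorname{Ric}_{33}=\tfrac{1}{2\varepsilon}(\varepsilon f f_{,11}-Q_{,yy})$ and $(\nabla^2\mathcal{F})_{33}=b''(z)+\tfrac{1}{2\varepsilon}Q_{,y}(2\kappa y+a)$, the terms carrying $f$ explicitly cancel by the value of $c$, $b$ is forced to be constant, and what survives is the linear equation $\beta Q_{,yy}=(2\kappa y+a)Q_{,y}$ in $y$ (with $z$ a spectator). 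One solution is $Q$ constant; the other comes from integrating $Q_{,y}=C(z)\exp\!\big(\tfrac{\kappa}{\beta}y^2+\tfrac{a}{\beta}y\big)$, where $\kappa/\beta=\lambda/(4\varepsilon(\mu-\beta))$, giving $Q(y,z)=C(z)\int\exp\!\big(\tfrac{a}{\beta}y+\tfrac{\lambda}{4\varepsilon(\mu-\beta)}y^2\big)\,dy+D(z)$ and the stated form of $f$. The converse direction follows by substituting these $\mathcal{F}$ and $f$ back into all six equations.

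I expect the $(3,3)$ equation to be the crux: it has by far the most terms, and the key point is to see that after inserting the forms already obtained the explicit-$f$ contributions cancel exactly — this is precisely what pins down $\beta$, $\varepsilon$ and the coefficient $\kappa$ — leaving a single first-order linear equation for $Q_{,y}$ whose integrating factor is exactly the Gaussian-type exponential appearing in the theorem.
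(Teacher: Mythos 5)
Your route is the same as the paper's (componentwise analysis of $\beta\operatorname{Ric}+\nabla^2\mathcal{F}=(-\lambda+\tfrac{\mu}{2}\operatorname{Scal})g$, with the $(1,3)$ component giving $\beta\neq\mu$ and $f=\tfrac{\lambda}{\mu-\beta}x^2+P(y,z)x+Q(y,z)$, the $(2,2)$ component giving $\mathcal{F}$ quadratic in $y$, the $(2,3)$ component controlling the $x$-coefficient, and the $(3,3)$ component yielding the separable ODE with the Gaussian-type integrating factor), but there is a genuine gap in the ``only if'' direction, and it starts with your Hessian. With $\mathcal{F}_{,1}=0$ the correct $(3,3)$ component is $(\nabla^2\mathcal{F})_{33}=\mathcal{F}_{,33}+\tfrac{1}{2\varepsilon}f_{,2}\mathcal{F}_{,2}+\tfrac12 f_{,1}\mathcal{F}_{,3}$; you dropped $\tfrac12 f_{,1}\mathcal{F}_{,3}$, which does not vanish while $\mathcal{F}_{,3}=a'(z)y+b'(z)$ is still unknown. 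In the paper's proof this is the term $\tfrac12(\partial_x f)(\partial_z\mathcal{F})$ of $(l_6)$: it produces the contributions $\tfrac{\lambda}{\mu-\beta}a'(z)\,xy$ and $\tfrac{\lambda}{\mu-\beta}b'(z)\,x$, and it is exactly the vanishing of the $xy$- and $x$-coefficients in the expanded $(l_6)$, together with $\lambda\neq0$, that forces $a'\equiv0$ and then $b'\equiv0$, i.e.\ that $a,b$ are genuine constants. With your truncated Hessian the $(3,3)$ equation carries no $x$-dependence once $a$ is constant, so your assertion ``$b$ is forced to be constant'' has no support: $b''(z)$ would merely enter as an inhomogeneity in the equation for $Q_{,y}$ and nothing kills it.

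The second, related gap is that you never derive $a'\equiv0$ at all: you impose it by ``requiring the coefficient of $x$ in $f$ to depend on $z$ alone, as in the claimed form,'' which is assuming the conclusion you are supposed to prove for an arbitrary gradient soliton with $\mathcal{F}=\mathcal{F}(y,z)$. The paper instead integrates $(l_5)$ to get the $x$-coefficient $-\tfrac{2a'(z)}{\beta}y+R(z)$ and only later eliminates the $y$-dependence via the $(l_6)$ constraints described above. A smaller point to reconcile: as you wrote it, the $(2,2)$ step gives $\mathcal{F}_{,22}=\tfrac12\varepsilon c$ with $c=-2\lambda+\mu\operatorname{Scal}=\tfrac{2\lambda\beta}{\mu-\beta}$, hence $\kappa=\tfrac{\varepsilon\lambda\beta}{2(\mu-\beta)}$, yet you assert without checking that this equals the coefficient $\tfrac{\varepsilon\lambda\beta}{4(\mu-\beta)}$ displayed in the statement; that factor of $2$ propagates into the exponent of the integrating factor, so it must be settled rather than asserted. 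Once $a$ and $b$ are known to be constants, your reduction to $\beta Q_{,yy}=(2\kappa y+a)Q_{,y}$ and its integration do match the paper's treatment; the missing content is precisely the elimination of the $z$-dependence of $a$ and $b$, which requires the full $(3,3)$ component you truncated.
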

 \begin{proof}
By definition \((M,g,\nabla\mathcal{F}, \beta,\lambda,\mu)\) is a gradient Ricci-Yamabe soliton  if and only if \[\beta\operatorname{Ric}+\nabla^2\mathcal{F}=(-\lambda+\frac{\mu}{2}\operatorname{Scal})g\]
which is equivalent to the following system :
\[
\begin{cases}
\partial_{xx}\mathcal{F}=0 & (l_1)\\[4pt]
\partial_{xy}\mathcal{F}=0 & (l_2)\\[4pt]
\partial_{xz}\mathcal{F}-\tfrac12(\partial_x f)(\partial_x \mathcal{F})=-\lambda+(\frac{\mu-\beta}{2})\partial_{xx}f & (l_3)\\[4pt]
\partial_{yy}\mathcal{F}=\varepsilon(-\lambda+\frac{\mu}{2} \partial_{xx}f) & (l_4)\\[4pt]
\partial_{yz}\mathcal{F}-\tfrac12(\partial_y f)(\partial_x \mathcal{F})=-\frac{\beta }{2}\partial_{xy}f & (l_5)\\[4pt]
\partial_{zz}\mathcal{F}-\tfrac12\!\left(f\partial_x f+\partial_z f\right)\partial_x\mathcal{F} +\tfrac{1}{2\varepsilon}(\partial_y f)(\partial_y\mathcal{F})
+\tfrac12 (\partial_x f)(\partial_z\mathcal{F})=\\-\frac{\beta}{2}\left(f\partial_{xx}f-\varepsilon\partial_{yy}f\right)+(-\lambda+\frac{\mu}{2}\partial_{xx}f)f & (l_6).
\end{cases}
\]
The lines $(l_1)$ and $(l_2)$ remain true according to the assumption of \(\mathcal{F}\).\\
We put 
\[
 \mathcal{F}(x,y,z)= B(y,z).
\]
\textbf{ Condition of $(l_3)$}\\
We have
\[
\partial_{xz}\mathcal{F}= \partial_x \mathcal{F}=0.
\]
So we have:
\[
0= -2\lambda+(\mu-\beta)\partial_{xx}f. \tag{$\star$}
\]
the equation $(\star)$ impose that
\[
(\mu-\beta)\partial_{xx}f = 2\lambda,
\] 
which is directly equivalent to $\mu\ne \beta$ because otherwise $\lambda=0$ leads to absurdity.
So we get 
\[
 f(x,y,z)=\frac{\lambda}{\mu-\beta}x^2+u(y,z)x+v(y,z). 
\]
\textbf{ Condition of $(l_4)$}\\
We have
\[
\partial_{yy}\mathcal{F}=B_{yy}(y,z), \qquad
\varepsilon(-\lambda+\frac{\mu }{2}\partial_{xx}f)=\varepsilon\left(-\lambda+\frac{\lambda\mu}{\mu-\beta}\right)=\frac{\varepsilon\lambda\beta}{2(\mu-\beta)}.
\]
Then we get
\[
B_{yy}(y,z)=\frac{\varepsilon\lambda\beta}{2(\mu-\beta)} \quad \Rightarrow \quad 
 B(y,z)=\frac{\varepsilon\lambda\beta}{4(\mu-\beta)}y^2+P(z)\,y+Q(z).
\]
\textbf{ Condition of $(l_5)$}\\
We have
\[
\partial_{yz}\mathcal{F}=P'(z), \qquad \partial_x\mathcal{F}=0,
\]
so we get
\[
P'(z) = -\frac{\beta }{2}\,\partial_{xy}f.
\]
But 
\[
\partial_x f =2 \frac{\lambda}{\mu-\beta}x+u(y,z), \quad \Rightarrow \quad \partial_{xy}f=u_y(y,z).
\]
So we have
\[
P'(z)=-\frac{\beta}{2} u_y(y,z).
\]
It follows that
\[
 u(y,z)=-\tfrac{2P'(z)}{\beta}\,y + R(z).
\]
By compiling the results, we obtain :
\[
\begin{aligned}
\mathcal{F}(x,y,z)&=\frac{\varepsilon\lambda\beta}{4(\mu-\beta)}y^2+P(z)\,y+Q(z),\\[6pt]
f(x,y,z)&=\frac{\lambda}{\mu-\beta}x^2+\Big(-\tfrac{2P'(z)}{\beta}y+R(z)\Big)x+v(y,z),
\end{aligned}
\]
\textbf{ Condition of $(l_6)$}\\
We have the following differential equation, where the functions $\mathcal{F}$ and $f$ are given by:
\begin{eqnarray*}
\partial_{zz}\mathcal{F}-\frac{1}{2}\left(f\partial_x f+\partial_z f\right)\partial_x\mathcal{F} +\frac{1}{2\varepsilon}(\partial_y f)(\partial_y\mathcal{F}) +\frac{1}{2} (\partial_x f)(\partial_z\mathcal{F}) &=& -\frac{\beta}{2}\left(f\partial_{xx}f-\varepsilon\partial_{yy}f\right)\nonumber\\
&&+\left(-\lambda+\frac{\mu}{2}\partial_{xx}f\right)f
\end{eqnarray*}
with
\[
\begin{aligned}
\mathcal{F}(x,y,z)&=\frac{\varepsilon\lambda\beta}{4(\mu-\beta)}y^2+P(z)\,y+Q(z),\\[6pt]
f(x,y,z)&=\frac{\lambda}{\mu-\beta}x^2+\Big(-\tfrac{2P'(z)}{\beta}y+R(z)\Big)x+v(y,z).
\end{aligned}
\]
\textbf{Calculation of the derivatives}\\
The partial derivatives of $\mathcal{F}$ and $f$ are:
\begin{itemize}
    \item $\partial_x \mathcal{F} = 0$
    \item $\partial_y \mathcal{F} = \frac{\varepsilon\lambda\beta}{2(\mu-\beta)}y+P(z)$
    \item $\partial_z \mathcal{F} = P'(z)y+Q'(z)$
    \item $\partial_{zz} \mathcal{F} = P''(z)y+Q''(z)$
    \item $\partial_x f = \frac{2\lambda}{\mu-\beta}x - \frac{2P'(z)}{\beta}y + R(z)$
    \item $\partial_y f = -\frac{2P'(z)}{\beta}x + \partial_y v$
    \item $\partial_{xx} f = \frac{2\lambda}{\mu-\beta}$
    \item $\partial_{yy} f = \partial_{yy} v$
\end{itemize}
\textbf{Simplification of the equation}\\
Since the term $\partial_x \mathcal{F}$ is zero, the second term on the left side of the equation cancels out. In addition, the right side (RHS) is considerably simplified.
\begin{itemize}
    \item The equation becomes :
    \[
    \partial_{zz}\mathcal{F} + \frac{1}{2\varepsilon}(\partial_y f)(\partial_y\mathcal{F}) + \frac12 (\partial_x f)(\partial_z\mathcal{F}) = -\frac{\beta}{2}\left(f\partial_{xx}f-\varepsilon\partial_{yy}f\right)+\left(-\lambda+\frac{\mu}{2}\partial_{xx}f\right)f
    \]
    \item The $(RHS)$ can be rewritten as :
    \[
    RHS = f\left[-\frac{\beta}{2}\partial_{xx}f+\left(-\lambda+\frac{\mu}{2}\partial_{xx}f\right)\right] + \frac{\beta\varepsilon}{2}\partial_{yy}f
    \]
    Using $\partial_{xx}f = \frac{2\lambda}{\mu-\beta}$, the expression in brackets is :
    \[
    -\frac{\beta}{2}\frac{2\lambda}{\mu-\beta} + \left(-\lambda+\frac{\mu}{2}\frac{2\lambda}{\mu-\beta}\right) = -\frac{\beta\lambda}{\mu-\beta} + \frac{-\lambda(\mu-\beta)+\mu\lambda}{\mu-\beta} = -\frac{\beta\lambda}{\mu-\beta} + \frac{\lambda\beta}{\mu-\beta} = 0
    \]
   Therefore, the right side is reduced to:
    \[
    RHS = \frac{\beta\varepsilon}{2}\partial_{yy}f = \frac{\beta\varepsilon}{2}\partial_{yy}v
    \]
\item The expression on the left-hand side of the equation $(LHS)$ is given by:
\[
LHS = \partial_{zz}\mathcal{F} + \frac{1}{2\varepsilon}(\partial_y f)(\partial_y\mathcal{F}) + \frac12 (\partial_x f)(\partial_z\mathcal{F})
\]
By substituting the calculated derivatives, we obtain the complete expression.:
\begin{align*}
    LHS& = (P''(z)y+Q''(z)) + \frac{1}{2\varepsilon}\left(-\frac{2P'(z)}{\beta}x + \partial_y v\right)\left(\frac{\varepsilon\lambda\beta}{2(\mu-\beta)}y+P(z)\right) \\
    &+ \frac12\left(\frac{2\lambda}{\mu-\beta}x - \frac{2P'(z)}{\beta}y + R(z)\right)\left((P'(z)y+Q'(z)\right)
\end{align*}
\textbf{Expanded expression}
To better understand the structure of this expression, we can group the terms according to the monomials of $x$ and $y$.
\begin{itemize}
    \item \textbf{Term on $xy$ :}
    \[
    -\frac{\lambda P'(z)}{2\left(\mu-\beta\right)} + \frac{\lambda P'(z)}{\mu-\beta} = \frac{\lambda P'(z)}{2\left(\mu-\beta\right)}
    \]
This term cancels itself out naturally.
    \item \textbf{Term on $x$ :}
    \[
    -\frac{P'(z)P(z)}{\varepsilon\beta} + \frac{\lambda Q'(z)}{\mu-\beta}
    \]

    \item \textbf{Term on $y^2$ :}
    \[
    -\frac{\left(P'(z)\right)^2}{\beta}
    \]

    \item \textbf{Term on $y$ :}
    \[
    P''(z) + \frac{\lambda\beta}{4(\mu-\beta)}\partial_y v - \frac{P'(z)Q'(z)}{\beta} + \frac12 R(z)P'(z)
    \]

    \item \textbf{Constant term (independent of $x$ and $y$) :}
    \[
    Q''(z) + \frac{P(z)}{2\varepsilon}\partial_y v + \frac12 R(z)Q'(z)
    \]
\end{itemize}
By combining all these terms, the complete expression on the left side is:
\[
\begin{aligned}
LHS &= \left(-\frac{\left(P'(z)\right)^2}{\beta}\right)y^2+ \left(-\frac{P'(z)P(z)}{\varepsilon\beta} + \frac{\lambda Q'(z)}{\mu-\beta}\right)x+\frac{\lambda P'(z)}{2\left(\mu-\beta\right)}xy \\
&+ \left(P''(z) + \frac{\lambda\beta}{4(\mu-\beta)}\partial_y v - \frac{P'(z)Q'(z)}{\beta} + \frac12 R(z)P'(z)\right)y \\
&+ \left(Q''(z) + \frac{P(z)}{2\varepsilon}\partial_y v + \frac12 R(z)Q'(z)\right)
\end{aligned}.
\]
\end{itemize}
So the constraints are:
\[\begin{cases}
\frac{\lambda P'(z)}{2\left(\mu-\beta\right)}=0\\
    \frac{P'(z)P(z)}{\varepsilon\beta} =\frac{\lambda Q'(z)}{\mu-\beta}\\
    \left(-\frac{\left(P'(z)\right)^2}{\beta}\right)y^2 + \left(P''(z) + \frac{\lambda\beta}{4(\mu-\beta)}\partial_y v - \frac{P'(z)Q'(z)}{\beta} + \frac12 R(z)P'(z)\right)y \,+\\
 \left(Q''(z) + \frac{P(z)}{2\varepsilon}\partial_y v + \frac12 R(z)Q'(z)\right)= \frac{\beta\varepsilon}{2}\partial_{yy}v(y,z)
\end{cases}\]
This implies that $P$ is a constant function $a$ since $\lambda \ne 0$, so $Q$ is a constant function $b$ and \[\left(\frac{\varepsilon a}{2}+\frac{\lambda\beta}{4(\mu-\beta)}y\right)\partial_y v=\frac{\beta\varepsilon}{2}\partial_{yy}v(y,z).\]   
Starting from the differential equation, let us set
 $V(y,z) = \partial_y v(y,z)$, so we have :
\[
\left(\frac{\varepsilon a}{2}+\frac{\lambda\beta}{4(\mu-\beta)}y\right)V = \frac{\beta\varepsilon}{2}\partial_{y}V
\]
This equation is a separable differential equation. By rearranging it to separate the variables $V$ and $y$, we obtain:
\[
\frac{dV}{V} = \frac{1}{\frac{\beta\varepsilon}{2}}\left(\frac{\varepsilon a}{2}+\frac{\lambda\beta}{4(\mu-\beta)}y\right)dy
\]
Simplifying the coefficients, the equation becomes:
\[
\frac{dV}{V} = \left(\frac{a}{\beta} + \frac{\lambda}{2\varepsilon(\mu-\beta)}y\right)dy
\]
By integrating both sides, we find the expression for $V(y,z)$ :
\[
\int \frac{dV}{V} = \int \left(\frac{a}{\beta} + \frac{\lambda}{2\varepsilon(\mu-\beta)}y\right)dy
\]
\[
\ln|V| = \frac{a}{\beta}y + \frac{\lambda}{4\varepsilon(\mu-\beta)}y^2 + K(z)
\]
Where $K(z)$ is an integration function depending on $z$. To obtain $V$, we take the exponential:
\[
V(y,z) = \exp\left(\frac{a}{\beta}y + \frac{\lambda}{4\varepsilon(\mu-\beta)}y^2 + K(z)\right)
\]
We can set $C(z) = e^{K(z)}$ to simplify:
\[
V(y,z) = C(z)\exp\left(\frac{a}{\beta}y + \frac{\lambda}{4\varepsilon(\mu-\beta)}y^2\right).
\]
To find $v(y,z)$, we need to integrate $V(y,z)$ with respect to $y$. The resulting integral is a non-elementary integral, meaning it cannot be expressed using standard mathematical functions. The solution is therefore expressed in the form of an integral:
\[
v(y,z) = \int V(y,z)dy + D(z) = C(z)\int \exp\left(\frac{a}{\beta}y + \frac{\lambda}{4\varepsilon(\mu-\beta)}y^2\right)dy + D(z)
\]
where $D(z)$ is a second arbitrary integration function of $z$. This brings us to:
 \[
\begin{aligned}
\mathcal{F}(x,y,z)&=\frac{\varepsilon\lambda\beta}{4(\mu-\beta)}y^2+a\,y+b,\\[6pt]
f(x,y,z)&=\frac{\lambda}{\mu-\beta}x^2+R(z)x+C(z)\int \exp\left(\frac{a}{\beta}y + \frac{\lambda}{4\varepsilon(\mu-\beta)}y^2\right)dy + D(z).
\end{aligned}
\]
\end{proof}
 \begin{theorem}\label{TT}
     Let $\mathcal
F$ is a smooth function on $M$ depending only on the variables $y$ and $z$. If $(M,g,\nabla\mathcal{F}, \beta, \lambda, \mu)$ is a steady gradient Ricci-Yamabe soliton  then we have the following:
 \begin{enumerate}
     \item If \(\beta=\mu\) then we get two cases \begin{enumerate}
         \item if \(\mu=0\) then we have \(\mathcal{F}(x,y,z)=ay+F(z)\)  and \[F''(z)+\frac{\partial x f(x,y,z)}{2}F'(z)+\frac{\varepsilon a}{2}\partial_y f(x,y,z)=0,\] where \(F\) is a smooth function depending only on the variable \(z\);
         \item  if \(\mu\ne 0\) then we have \[\mathcal{F}(x,y,z)=F_1(z)y+F_2(z)\] and \small\begin{eqnarray*}
         f(x,y,z)&=&\frac{-2}{\mu}F_1'(z)x\\
         &&+\Big[A(z)+\int_{y_0}^y\left(\frac{2}{\varepsilon\mu}\left(-F_1''(z)+\frac{(F_1'(z))^2}{\mu}\right)t+\frac{2}{\varepsilon\mu}\left(-F_2''(z)+\frac{F_1'(z)F_2(z)}{\mu}\right)\right)\nonumber\\
         &&\times \exp\left(-\frac{F_1(z)}{\mu}t\right)dt\Big]\exp\left(\frac{F_1(z)}{\mu}y\right).
         \end{eqnarray*}
     \end{enumerate}
     \item  If  \(\beta\ne\mu\) then we get two cases \begin{enumerate}
         \item if \(\beta=0\) then we have \(\mathcal{F}(x,y,z)=bz+c\) and \(f(x,y,z)=xf_1(y,z)+f_2(y,z)\) or \(\mathcal{F}(x,y,z)=ay+F_2(z)\) and  \(f(x,y,z)=xF_3(z)-\frac{2\varepsilon}{a}y\left(F_2''(z)+\frac{a}{2}F_2'(z)F_3(z)\right)+F_4(z)\);
         \item if \(\beta\ne0\) then we have \(\mathcal{F}(x,y,z)=F_2(z),\quad f(x,y,z)=xF_5(z)+\frac{ F_2''(z) + \frac{1}{2}F_5(z)F_2'(z)}{\beta\varepsilon}y^2+F_6(z)y+F_7(z)\),
     \end{enumerate}
 \end{enumerate}
where \(a,\) \(b\), \(c\) are real constants, the functions $F_{k\in\lbrace 1; 2; 3; 4; 5 ; 6; 7\rbrace}$, \(A\) are smooth functions dependent on the variable $z$ and the functions $f_{k\in\lbrace 1; 2\rbrace}$ are smooth functions dependent only on the $y, z$ values.
 \end{theorem}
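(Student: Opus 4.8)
The statement is a one-directional classification, so only necessary conditions have to be produced: one starts from the assumption that $(M,g,\nabla\mathcal F,\beta,\lambda,\mu)$ is a steady gradient Ricci--Yamabe soliton (so $\lambda=0$ in \eqref{YS1}) and reads off, branch by branch, the possible forms of $\mathcal F$ and $f$. The plan is to reuse the mechanism of the proof of Theorem~\ref{t1}: expand the identity $\beta\operatorname{Ric}+\nabla^2\mathcal F=\tfrac{\mu}{2}(\partial_{xx}f)\,g$ into its six independent components $(l_1)$--$(l_6)$, using the Hessian and Ricci components from Section~\ref{sec:preliminaires} together with $\operatorname{Scal}=\partial_{xx}f$. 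Since $\mathcal F=\mathcal F(y,z)$ we have $\partial_x\mathcal F=\partial_{xx}\mathcal F=\partial_{xy}\mathcal F=\partial_{xz}\mathcal F=0$, so $(l_1)$ and $(l_2)$ are automatic and the remaining four simplify drastically; the four-fold conclusion then comes entirely from bookkeeping on whether $\mu=\beta$, $\mu=0$, $\beta=0$.

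The first fork is $(l_3)$, which now reads $(\mu-\beta)\,\partial_{xx}f=0$: this separates Case~1 ($\beta=\mu$) from Case~2 ($\beta\neq\mu$, hence $\partial_{xx}f=0$, i.e.\ $f$ affine in $x$). In Case~1, $(l_4)$ becomes $\partial_{yy}\mathcal F=\tfrac{\varepsilon\mu}{2}\partial_{xx}f$. If in addition $\mu=0$ (so $\beta=0$), this forces $\partial_{yy}\mathcal F=0$, and $(l_5)$ — which for $\beta=0$ is $\partial_{yz}\mathcal F=0$ — gives $\mathcal F=ay+F(z)$; the right-hand side of $(l_6)$ then vanishes identically and $(l_6)$ collapses to precisely the stated ODE for $F$. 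If $\mu\neq0$, comparing $x$-degrees in $(l_4)$ and $(l_5)$ shows $\partial_{xx}f$ depends on $z$ only, so $\mathcal F$ is at most quadratic in $y$ and $f$ at most quadratic in $x$; substituting into $(l_6)$, whose right-hand side is $\tfrac{\varepsilon\mu}{2}\partial_{yy}f$, and matching the coefficients of $1,x,x^2$ and of $1,y,y^2$ removes the quadratic-in-$y$ term, yields $\mathcal F=F_1(z)y+F_2(z)$, fixes the $x$-coefficient of $f$ as $-\tfrac{2}{\mu}F_1'(z)$, and turns the $x^0$-part of $(l_6)$ into a first-order linear ODE in $y$ for the $x$-independent part of $f$; separating variables produces the factor $\exp\!\big(\tfrac{F_1(z)}{\mu}y\big)$ and the non-elementary $y$-integral in the displayed formula.

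Case~2 is parallel: $(l_4)$ now gives $\partial_{yy}\mathcal F=0$ directly, so $\mathcal F=P(z)y+Q(z)$, while $(l_5)$ links $P'$ to $\partial_{xy}f$; the sub-fork is $\beta=0$ (Case~2a) versus $\beta\neq0$ (Case~2b). For $\beta=0$ one distinguishes whether the $y$-slope of $\mathcal F$ vanishes: if it does one gets $\mathcal F=bz+c$ with $(l_6)$ imposing only the generic shape $f=xf_1(y,z)+f_2(y,z)$; if not, $\mathcal F=ay+F_2(z)$, and matching powers of $x$ in $(l_6)$ forces $f$ into the displayed affine-in-$y$ form. For $\beta\neq0$ the coefficient of $x$ in $(l_6)$ forces $P$ to be constant, the surviving case is $\mathcal F=F_2(z)$, and the $x^0$-part of $(l_6)$ becomes $\tfrac{\varepsilon\beta}{2}\partial_{yy}f=F_2''(z)+\tfrac12F_5(z)F_2'(z)$, which integrates in $y$ to the stated quadratic polynomial. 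Collecting the four outcomes proves the theorem.

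The real obstacle is not conceptual but the monomial bookkeeping in Cases~1b and~2b: there $(l_6)$ is a genuine PDE coupling the $z$-profiles $F_1,F_2,\dots$ with the $y$-profile of $f$, and one must expand both sides as polynomials in $x$ (and $y$), match coefficients carefully — this is the step that eliminates the quadratic-in-$y$ part of $\mathcal F$ — and then integrate the resulting linear ODE in $y$ without sign errors, using throughout that $\varepsilon^2=1$, hence $1/\varepsilon=\varepsilon$. One must also take care to record the degenerate sub-cases (a vanishing $y$-slope, or a vanishing denominator such as $\mu-\beta$ or $a$) separately instead of dividing by them.
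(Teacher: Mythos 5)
Your route is the same as the paper's: expand $\beta\operatorname{Ric}+\nabla^2\mathcal F=\tfrac{\mu}{2}(\partial_{xx}f)\,g$ (steady, so $\lambda=0$) into its six components, use $\partial_x\mathcal F=0$, branch on $\beta=\mu$, $\mu=0$, $\beta=0$, and integrate the residual equations; branches 1(a) and 2(a) close essentially as you and the paper describe. The genuine gap sits exactly in the two branches you defer to ``monomial bookkeeping'', because the eliminations you announce there are not consequences of coefficient matching. In Case 1(b) ($\beta=\mu\neq0$), the equations $(l_4)$ and $(l_5)$ only force $f=A(z)x^2+B(y,z)x+C(y,z)$ and $\mathcal F=\tfrac{\varepsilon\mu A(z)}{2}y^2+F_1(z)y+F_2(z)$; substituting into $(l_6)$ and matching powers of $x$ and $y$ does \emph{not} kill $A$. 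For instance $\beta=\mu\neq0$, $f(x,y,z)=A_0x^2+c(z)$ with $A_0$ a nonzero constant and $\mathcal F=\tfrac{\varepsilon\mu A_0}{2}y^2$ satisfies all six equations, yet $\mathcal F$ is genuinely quadratic in $y$; so the step ``matching coefficients removes the quadratic-in-$y$ term'' would fail. (The paper's own proof reaches that conclusion only by asserting, from its line $(s_2)$, that $f$ is affine in $x$, which does not follow either; your blind attempt cannot lean on that.)

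The same problem recurs in Case 2(b). From $(l_5)$ one gets $f_1(y,z)=-\tfrac{2F_1'(z)}{\beta}y+F_5(z)$, and the coefficient of $x$ in $(l_6)$ is $-\tfrac{F_1(z)F_1'(z)}{\varepsilon\beta}$; as you correctly state, this forces $F_1$ to be \emph{constant}, but your next step, ``the surviving case is $\mathcal F=F_2(z)$'' (i.e.\ $F_1\equiv0$), is unsupported. For a nonzero constant $F_1=a$ the $x^0$-part of $(l_6)$ reads $F_2''+\tfrac{a}{2\varepsilon}\partial_yf_2+\tfrac12F_5F_2'=\tfrac{\beta\varepsilon}{2}\partial_{yy}f_2$, whose solutions are exponential in $y$: e.g.\ $\mathcal F=ay$, $f=c_1(z)e^{(a/\beta)y}+c_0(z)$ is a steady gradient soliton for every $\beta\neq0$, $\mu\neq\beta$, and it is not of the quadratic-in-$y$ form in the statement. (The paper lands on $F_1\equiv0$ only because its substitution writes $F_1$ where $F_1'$ belongs.) So as written your plan cannot be completed: in branches 1(b) and 2(b) you must either record these additional solution families or identify and state the extra hypothesis excluding them; the announced coefficient matching alone does not deliver the stated conclusions.
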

 \begin{proof}
 \begin{enumerate}
     \item If  \(\beta=\mu\) and using the equation \eqref{YS} then we have the following the system : \[
\begin{cases}
\partial_{yy}\mathcal{F}=\varepsilon\frac{\mu}{2} \partial_{xx}f \\[4pt]
\partial_{yz}\mathcal{F}=-\frac{\mu }{2}\partial_{xy}f \\[4pt]
\partial_{zz}\mathcal{F} +\tfrac{1}{2\varepsilon}(\partial_y f)(\partial_y\mathcal{F})
+\tfrac12 (\partial_x f)(\partial_z\mathcal{F})
=\frac{\mu}{2}\varepsilon\partial_{yy}f.
\end{cases}
\]     \begin{enumerate}
    \item If \(\mu=0\) then the system reduces to  \[
\begin{cases}
\partial_{yy}\mathcal{F}=0 & (S_1)\\[4pt]
\partial_{yz}\mathcal{F}=0 & (S_2)\\[4pt]
\partial_{zz}\mathcal{F} +\tfrac{1}{2\varepsilon}(\partial_y f)(\partial_y\mathcal{F})
+\tfrac12 (\partial_x f)(\partial_z\mathcal{F})
=0 & (S_3).
\end{cases}
\]  The lines  \((S_1)\)  and \((S_2)\) show that \(\mathcal{F}(x,y,z)=ay+F(z)\). So we have \[F''(z)+\frac{\partial x f(x,y,z)}{2}F'(z)+\frac{\varepsilon a}{2}\partial_y f(x,y,z)=0.\]
\item  If  \(\mu\ne0\) then we have   \[
\begin{cases}
\partial_{yy}\mathcal{F}=\varepsilon\frac{\mu}{2} \partial_{xx}f & (s_1)\\[4pt]
\partial_{yz}\mathcal{F}=-\frac{\mu }{2}\partial_{xy}f & (s_2)\\[4pt]
\partial_{zz}\mathcal{F} +\tfrac{1}{2\varepsilon}(\partial_y f)(\partial_y\mathcal{F})
+\tfrac12 (\partial_x f)(\partial_z\mathcal{F})
=\frac{\mu}{2}\varepsilon\partial_{yy}f & (s_3).
\end{cases}
\]  The line  \((s_2)\) shows that the function \(f\) is affine on the variable \(x\) so we have \(f(x,y,z)=f_1(y,z)x+f_2(y,z)\) and the line  \((s_1)\) becomes \(\partial_{yy}\mathcal{F}(x,y,z)=0\) \[\mathcal{F}(x,y,z)=F_1(z)y+F_2(z)\] which brings us back to \[f(x,y,z)=\frac{-2}{\mu}F_1'(z)x+f_2(y,z).\]
The  line \((s_3)\) becomes \[F_1''(z)y+F_2''(z)+\frac{\varepsilon F_1(z)}{2}\partial_yf_2(y,z)-\frac{y}{\mu}\left(F_1'(z)\right)^2-\frac{1}{\mu}F_1'(z)F_2(z)=\frac{\varepsilon\mu}{2}\partial_{yy}f_2(y,z).\]
The differential equation is rewritten in the form:
\begin{eqnarray}\label{de1}
\partial_{yy}f_2 - \frac{F_1(z)}{\mu}\,\partial_y f_2
= \frac{2}{\varepsilon\mu}\!\left(-F_1''(z)+\frac{(F_1'(z))^2}{\mu}\right)y
+\frac{2}{\varepsilon\mu}\!\left(-F_2''(z)+\frac{F_1'(z)F_2(z)}{\mu}\right).
\end{eqnarray}

\medskip
If we set 
\[
r(z) = \frac{F_1(z)}{\mu}, \quad
s_1(z) = \frac{2}{\varepsilon\mu}\!\left(-F_1''(z)+\frac{(F_1'(z))^2}{\mu}\right), \
s_0(z) = \frac{2}{\varepsilon\mu}\!\left(-F_2''(z)+\frac{F_1'(z)F_2(z)}{\mu}\right),
\]
then the equation \eqref{de1} becomes now
\[
\partial_{yy} f_2 - r(z)\,\partial_y f_2 = s_1(z)\,y+s_0(z).
\] The homogeneous solution is: \[f_2^0(y,z)=A(z)\exp(r(z)y)\] so the general solution is : \[f_2(y,z)=\left(A(z)+\int_{y_0}^y\left(s_1(z)t+s_0(z)\right)\exp(-r(z)t)dt\right)\exp(r(z)y).\]

\end{enumerate}
 \item  If  \(\beta\ne\mu\) then \[
\begin{cases}
\partial_{xx}\mathcal{F}=0 & (L_1)\\[4pt]
\partial_{xy}\mathcal{F}=0 & (L_2)\\[4pt]
0=(\frac{\mu-\beta}{2})\partial_{xx}f & (L_3)\\[4pt]
\partial_{yy}\mathcal{F}=\varepsilon\frac{\mu}{2} \partial_{xx}f & (L_4)\\[4pt]
\partial_{yz}\mathcal{F}=-\frac{\beta }{2}\partial_{xy}f & (L_5)\\[4pt]
\partial_{zz}\mathcal{F} +\tfrac{1}{2\varepsilon}(\partial_y f)(\partial_y\mathcal{F})
+\tfrac12 (\partial_x f)(\partial_z\mathcal{F})=-\frac{\beta}{2}\left(f\partial_{xx}f-\varepsilon\partial_{yy}f\right)+\frac{\mu}{2}f\partial_{xx}f & (L_6).
\end{cases}
\] The line\((L_3)\) becomes \(f(x,y,z)=f_1(y,z)x+f_2(y,z)\) and the line \((L_4)\) is now \(\mathcal{F}(x,y,z)=F_1(z)y+F_2(z)\). 
The line \((L_5)\) implies that \(F'_1(z)=-\frac{\beta}{2}\partial_y f_1(y,z)\). 
\begin{enumerate}
    \item  If \(\beta=0\) then we have \(F_1(z)=a\) and \[F''_2(z)+\frac{\varepsilon a}{2}x\partial_yf_1(y,z)+\frac{\varepsilon a}{2}\partial_yf_2(y,z)+\frac{a}{2}f_1(y,z)F_2'(z)=0\]
    \begin{enumerate}
        \item  if \(a=0\) then \(\mathcal{F}(x,y,z)=bz+c\) and \(f(x,y,z)=xf_1(y,z)+f_2(y,z)\)
        \item if \(a\ne 0\) then \(f_1(y,z)=F_3(z)\) and  \(f_2(y,z)=-\frac{2\varepsilon}{a}y\left(F_2''(z)+\frac{a}{2}F_2'(z)F_3(z)\right)+F_4(z)\).
    \end{enumerate}
    \item If \(\beta\ne 0\) then we have \(f_1(y,z)=-\frac{2F_1(z)}{\beta}y+F_5(z)\) and we have the following functions
\begin{itemize}
    \item $f(x,y,z)=f_1(y,z)x+f_2(y,z)$
    \item $\mathcal{F}(x,y,z)=F_1(z)y+F_2(z)$
    \item $f_1(y,z)=-\frac{2F_1(z)}{\beta}y+F_5(z)$
\end{itemize}
The equation to evaluate is:
\begin{eqnarray}\label{ed2} \partial_{zz}\mathcal{F} +\frac{1}{2\varepsilon}(\partial_y f)(\partial_y\mathcal{F}) +\frac12 (\partial_x f)(\partial_z\mathcal{F})=-\frac{\beta}{2}\left(f\partial_{xx}f-\varepsilon\partial_{yy}f\right)+\frac{\mu}{2}f\partial_{xx}f.
\end{eqnarray}
Now we compute the partial derivatives of $\mathcal{F}$ and $f$.\\
\textbf{Partial derivatives of $\mathcal{F}$} :
\begin{itemize}
    \item $\partial_y \mathcal{F} = F_1(z)$
    \item $\partial_z \mathcal{F} = y F_1'(z) + F_2'(z)$
    \item $\partial_{zz} \mathcal{F} = y F_1''(z) + F_2''(z)$
\end{itemize}
\textbf{Partial derivatives of  $f$} :
\begin{itemize}
    \item $\partial_x f = f_1(y,z) = -\frac{2F_1(z)}{\beta}y+F_5(z)$
    \item $\partial_{xx} f = 0$
    \item $\partial_y f = \partial_y(f_1(y,z)x) + \partial_y(f_2(y,z)) = \left(-\frac{2F_1(z)}{\beta}\right)x + \partial_y f_2(y,z)$
    \item $\partial_{yy} f = \partial_y\left(-\frac{2F_1(z)}{\beta}x + \partial_y f_2(y,z)\right) = \partial_{yy}f_2(y,z)$
\end{itemize}
Now we substitute these partial derivatives in the equation \eqref{ed2} and noting that the terms $partial_{xx}f$ are zero, we get 
\begin{eqnarray}\label{eq12}\left(y F_1''(z) + F_2''(z)\right) + \frac{1}{2\varepsilon}\left[\left(-\frac{2F_1(z)}{\beta}\right)x
+ \partial_y f_2(y,z)\right]\left(F_1(z)\right) \\+ \frac{1}{2}\left(-\frac{2F_1(z)}{\beta}y+F_5(z)\right)\left(y F_1'(z) + F_2'(z)\right)\nonumber = -\frac{\beta}{2}\left(0-\varepsilon\partial_{yy}f\right)+\frac{\mu}{2}f(0) 
\end{eqnarray}
By simplifying the right side of the equation \eqref{eq12}, we obtain:
\begin{eqnarray*} \left(y F_1''(z) + F_2''(z)\right) + \frac{1}{2\varepsilon}\left[\left(-\frac{2F_1(z)}{\beta}\right)x + \partial_y f_2(y,z)\right]\left(F_1(z)\right)\\ + \frac{1}{2}\left(-\frac{2F_1(z)}{\beta}y+F_5(z)\right)\left(y F_1'(z) + F_2'(z)\right) 
= \frac{\beta\varepsilon}{2}\partial_{yy}f_2(y,z) 
\end{eqnarray*}
this amounts to saying that \(F_1(z)=0\) and $$ F_2''(z) + \frac{1}{2}F_5(z)F_2'(z) = \frac{\beta\varepsilon}{2}\partial_{yy}f_2(y,z) $$
\(\mathcal{F}(x,y,z)=F_2(z),\qquad f(x,y,z)=xF_5(z)+\frac{ F_2''(z) + \frac{1}{2}F_5(z)F_2'(z)}{\beta\varepsilon}y^2+F_6(z)y+F_7(z)\).

\end{enumerate}
 \end{enumerate}

 \end{proof}
 \begin{example}
For Theorem \ref{TT}, we take the case \(\beta=\mu=0\) and for \(f(x,y,z)=a_1x+H(z)y\) where \(a_1\) is real constant and \(H\) is a smooth function of \(z\). We obtain the following differential equation:
\[F''(z)+\frac{a_1}{2}F'(z)=-\frac{\varepsilon a}{2}H(z).\]

This is a first-order linear ordinary differential equation (ODE) by setting $\mathcal{U}(z) = F'(z)$. Let's perform this substitution to solve it.

 \textbf{The Transformed Equation}

By setting $\mathcal{U}(z) = F'(z)$, we also have $\mathcal{U}'(z) = F''(z)$. Substituting these terms into the original equation gives us a first-order ODE for $\mathcal{U}(z)$:

\[\mathcal{U}'(z) + \frac{a_1}{2}\mathcal{U}(z) = -\frac{\varepsilon a}{2}H(z).\]

This is a standard first-order linear ODE of the form $\mathcal{U}'(z) + P(z)\mathcal{U}(z) = Q(z)$, with $P(z) = \frac{a_1}{2}$ and $Q(z) = -\frac{\varepsilon a}{2}H(z)$.

\textbf{ Finding the Solution for} $\mathcal{U}(z)$

We will use an integrating factor, given by $I(z) = e^{\int P(z)dz}$.
\begin{enumerate}
 \item \textbf{Calculating the Integrating Factor}:
 \[I(z) = e^{\int \frac{a_1}{2}dz} = e^{\frac{a_1}{2}z}.\]

\item \textbf{Multiplying the ODE by the Integrating Factor}:
 \[e^{\frac{a_1}{2}z}\mathcal{U}'(z) + e^{\frac{a_1}{2}z}\frac{a_1}{2}\mathcal{U}(z) = -e^{\frac{a_1}{2}z}\frac{\varepsilon a}{2}H(z).\]
 The left side of the equation is now the derivative of a product:
 \[\frac{d}{dz}\left(e^{\frac{a_1}{2}z}\mathcal{U}(z)\right) = -e^{\frac{a_1}{2}z}\frac{\varepsilon a}{2}H(z).\]

\item \textbf{Integrating both sides with respect to } $z$:
 \[\int \frac{d}{dz}\left(e^{\frac{a_1}{2}z}\mathcal{U}(z)\right)dz = \int -e^{\frac{a_1}{2}z}\frac{\varepsilon a}{2}H(z)dz.\]
 \[e^{\frac{a_1}{2}z}\mathcal{U}(z) = -\frac{\varepsilon a}{2}\int e^{\frac{a_1}{2}z}H(z)dz + C_1,\]
 where $C_1$ is the constant of integration.

\item \textbf{Solving for} $\mathcal{U}(z)$:
 \[\mathcal{U}(z) = e^{-\frac{a_1}{2}z}\left(-\frac{\varepsilon a}{2}\int e^{\frac{a_1}{2}z}H(z)dz + C_1\right).\]

\end{enumerate}

 \textbf{Finding the Solution for}  $F(z)$

Since $\mathcal{U}(z) = F'(z)$, we integrate $\mathcal{U}(z)$ to find $F(z)$:

\[F(z) = \int \mathcal{U}(z)dz = \int \left(e^{-\frac{a_1}{2}z}\left(-\frac{\varepsilon a}{2}\int e^{\frac{a_1}{2}z}H(z)dz + C_1\right)\right)dz + C_2,\]
where $C_2$ is a new constant of integration.
Thus \[\mathcal{F}(x,y,z)=ay+ \int \left(e^{-\frac{a_1}{2}z}\left(-\frac{\varepsilon a}{2}\int e^{\frac{a_1}{2}z}H(z)dz + C_1\right)\right)dz + C_2.\]
\end{example}
 \begin{theorem}
Let \(\mathcal{F}\) be a smooth function on \(M\) depending on the three variables.
     \begin{enumerate}
         \item The manifold \((M,g,\nabla\mathcal{F},0,\lambda,0)\) is a gradient Ricci-Yamabe soliton if and only if   \[\mathcal{F}(x,y,z)=F(z)x-\frac{\varepsilon\lambda}{2}y^2+a(z)y+b(z),\] \[f(x,y,z)=\frac{2y\partial_{z}a(z)+2\partial_{z}b(z)}{F(z)}+\frac{2\left(\lambda+F'(z)\right)}{F(z)}x+c(z)\]
         and \[\partial_{zz} \mathcal{F}-\frac{1}{2}\left(f\,\partial_xf+\partial_zf\right)\partial_x\mathcal{F} +\frac{1}{2\varepsilon}\partial_yf\partial_y\mathcal{F} +\frac{1}{2} \partial_xf\partial_z\mathcal{F}=-\lambda f, \]
         \item for a nonzero constant \(\mu\), the manifold  \((M,g,\nabla\mathcal{F},0,\lambda,\mu)\) is a gradient Ricci-Yamabe soliton if and only if\(\mathcal{F}(x,y,z)=F(z)x+F_2(y,z)\),  \[f(x,y,z)=\frac{2\partial_{z}F_2(y,z)}{F(z)}+\frac{\left(\varepsilon\lambda+\partial_{yy}F_2(y,z)\right)}{\varepsilon\mu}x^2+F_4(z)x+F_5(z)\] and \[\begin{cases}
             \partial_{xz} \mathcal{F}-\frac{1}{2}\partial_xf \partial_{x} \mathcal{F}=-\lambda+\frac{\mu}{2}\partial_{xx}f\\
             \partial_{zz} \mathcal{F}-\frac{1}{2}\left(f\,\partial_xf+\partial_zf\right)\partial_x\mathcal{F} +\frac{1}{2\varepsilon}\partial_yf\partial_y\mathcal{F} +\frac{1}{2} \partial_xf\partial_z\mathcal{F}=(-\lambda+\frac{\mu}{2}\partial_{xx}f)f
         \end{cases},\]
     \end{enumerate}
     where \(F\), \(a\), \(b\), \(c\), \(F_4\), \(F_5\) functions depending on the variable $z$ such that $F$ is nonzero and $F_2$ is a function depending on the variables $y, z$.
 \end{theorem}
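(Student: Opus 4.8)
The plan is to run exactly the scheme of Theorems \ref{t1} and \ref{TT}, now specialized to \(\beta=0\). Setting \(\beta=0\) in the gradient Ricci--Yamabe equation \eqref{YS1} and using \(\operatorname{Scal}=f_{,11}\) from \eqref{Scal1}, the soliton condition collapses to the single tensor identity \(\nabla^2\mathcal{F}=\bigl(-\lambda+\tfrac{\mu}{2}f_{,11}\bigr)g\). Comparing these two symmetric \((0,2)\)-tensors entrywise and inserting the Hessian components \((\nabla^2\mathcal{F})_{ij}\) and the metric coefficients \(g_{ij}\) recorded in Section \ref{sec:preliminaires}, one obtains six scalar equations indexed by \((i,j)\); since \(g_{11}=g_{12}=g_{23}=0\), \(g_{22}=\varepsilon\), \(g_{13}=1\) and \(g_{33}=f\), the right-hand sides are \(0\) on \((1,1),(1,2),(2,3)\), equal to \(\varepsilon\bigl(-\lambda+\tfrac\mu2 f_{,11}\bigr)\) on \((2,2)\), to \(-\lambda+\tfrac\mu2 f_{,11}\) on \((1,3)\), and to \(\bigl(-\lambda+\tfrac\mu2 f_{,11}\bigr)f\) on \((3,3)\). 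I would then integrate this system in the order \((1,1),(1,2),(2,2),(2,3),(1,3),(3,3)\): first extract the shape of \(\mathcal{F}\), then that of \(f\), and finally record whatever cannot be integrated as a residual constraint.

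Equations \((1,1)\) and \((1,2)\) read \(\partial_{xx}\mathcal{F}=\partial_{xy}\mathcal{F}=0\), so \(\mathcal{F}(x,y,z)=F(z)x+G(y,z)\); the hypothesis that \(F\) is nonzero is precisely what permits the divisions below. In part (1), \(\mu=0\), equation \((2,2)\) reduces to \(\partial_{yy}G=-\varepsilon\lambda\), giving \(G=-\tfrac{\varepsilon\lambda}{2}y^2+a(z)y+b(z)\); equation \((2,3)\), namely \(\partial_{yz}\mathcal{F}=\tfrac12 f_{,2}\,\partial_x\mathcal{F}\), yields \(f_{,2}=\tfrac{2}{F(z)}\partial_z G\), and equation \((1,3)\), namely \(\partial_{xz}\mathcal{F}-\tfrac12 f_{,1}\,\partial_x\mathcal{F}=-\lambda\), yields \(f_{,1}=\tfrac{2(\lambda+F'(z))}{F(z)}\); integrating these (again using \(F\neq0\)) produces the stated closed form of \(f\) with a leftover function \(c(z)\) of \(z\) alone, and \((3,3)\) is then exactly the displayed PDE \(\partial_{zz}\mathcal{F}-\tfrac12(f\partial_x f+\partial_z f)\partial_x\mathcal{F}+\tfrac{1}{2\varepsilon}\partial_y f\,\partial_y\mathcal{F}+\tfrac12\partial_x f\,\partial_z\mathcal{F}=-\lambda f\), kept as a constraint. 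In part (2), \(\mu\neq0\): here \((2,2)\) instead fixes a second \(x\)-derivative of \(f\), namely \(f_{,11}=\tfrac{2(\varepsilon\lambda+\partial_{yy}G)}{\varepsilon\mu}\); integrating twice in \(x\) produces the \(x^2\)-term and an \(x\)-linear term, equation \((2,3)\) forces the \(x\)-linear coefficient to depend on \(z\) alone and fixes the \(x\)-free part as \(\tfrac{2\partial_z G}{F(z)}+F_5(z)\), so \(f\) takes the form in the statement (with \(G=F_2\), the \(x\)-linear coefficient renamed \(F_4\)), while the remaining equations \((1,3)\) and \((3,3)\) are left as the two residual constraints displayed. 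The converse is a direct verification: a pair \((\mathcal{F},f)\) of the indicated form that also satisfies the residual constraint(s) makes all six component equations hold, hence satisfies \eqref{YS1}.

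The computations are all of the peel-an-integral-at-a-time type and hold no surprises; the one point requiring care is distinguishing the equations that genuinely integrate from those that do not. Equations \((1,1),(1,2),(2,2),(2,3)\) — and, in part (1), \((1,3)\) — are linear in the remaining unknown once the earlier ones are substituted, so they integrate explicitly, the nonvanishing of \(F(z)\) being essential in \((2,3)\) and in \((1,3)\) of part (1); by contrast \((3,3)\), together with \((1,3)\) in part (2), couples \(f\), its first-order derivatives and \(\mathcal{F}\) in a genuinely nonlinear way and cannot be solved in elementary form, which is why these become the stated PDE constraints. The main obstacle is therefore organizational rather than technical: keeping the two regimes \(\mu=0\) and \(\mu\neq0\) apart — in the first \((2,2)\) constrains \(\mathcal{F}\), in the second it constrains \(f\) — and checking that the expression for \(f\) produced by \((2,2)\)--\((2,3)\) is consistent with what \((1,3)\) and \((3,3)\) subsequently require.
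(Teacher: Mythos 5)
Your proposal follows essentially the same route as the paper: write out the six component equations of $2\nabla^2\mathcal{F}=(-2\lambda+\mu f_{,11})g$ with $\beta=0$, use $(1,1)$–$(1,2)$ to get $\mathcal{F}=F(z)x+F_2(y,z)$, then integrate $(2,2)$, $(2,3)$ and (for $\mu=0$) $(1,3)$ using $F\neq0$, keeping the remaining equations as the stated PDE constraints — only the order in which $(2,2)$ and $(2,3)$ are exploited differs trivially from the paper. Aside from the harmless slip of writing $f_{,2}=\tfrac{2}{F}\partial_z G$ where $\tfrac{2}{F}\partial_{yz}G$ is meant, the argument matches the paper's proof.
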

 \begin{proof}
     By hypothesis we have the following system :  \[\begin{cases}
         \partial_{xx}\mathcal{F}=0& (c_1)\\
         \partial_{xy} \mathcal{F}=0& (c_2)\\
          \partial_{xz} \mathcal{F}-\frac{1}{2}\partial_xf \partial_{x} \mathcal{F}=-\lambda+\frac{\mu}{2}\partial_{xx}f& (c_3)\\
           \partial_{yy} \mathcal{F}=\varepsilon(-\lambda+\frac{\mu}{2} \partial_{xx}f)& (c_4)\\
           \partial_{yz}\mathcal{F}-\frac{1}{2}\partial_yf\partial_x\mathcal{F}=0& (c_5)\\
            \partial_{zz} \mathcal{F}-\frac{1}{2}\left(f\,\partial_xf+\partial_zf\right)\partial_x\mathcal{F} +\frac{1}{2\varepsilon}\partial_yf\partial_y\mathcal{F} +\frac{1}{2} \partial_xf\partial_z\mathcal{F}=(-\lambda+\frac{\mu}{2}\partial_{xx}f)f& (c_6)
     \end{cases}\]
   The line  \((c_1)\) shows that  \(\mathcal{F}(x,y,z)=F_1(y,z)x+F_2(y,z)\) and the line  \((c_2)\) ensures that \(\mathcal{F}(x,y,z)=F(z)x+F_2(y,z)\) 
     so the line \((c_5)\) becomes \[2\partial_{yz}F_2(y,z)=F(z)\partial_yf(x,y,z)\Leftrightarrow f(x,y,z)=\frac{2\partial_{z}F_2(y,z)}{F(z)}+F_3(x,z).\]
     \begin{enumerate}
         \item For  \(\mu=0\), the line  \((c_4)\) implies that \(F_2(y,z)=\frac{-\varepsilon\lambda}{2}y^2+a(z)y+b(z)\) so we have  \[f(x,y,z)=\frac{2y\partial_{z}a(z)+2\partial_{z}b(z)}{F(z)}+F_3(x,z).\]
        The line  \((c_3)\) becomes \[F'(z)-\frac{F(z)}{2}\partial_xF_3(x,z)=-\lambda\Leftrightarrow F_3(x,z)=\frac{2\left(\lambda+F'(z)\right)}{F(z)}x+c(z)\]
         \(\mathcal{F}(x,y,z)=F(z)x-\frac{\varepsilon\lambda}{2}y^2+a(z)y+b(z)\), \(f(x,y,z)=\frac{2y\partial_{z}a(z)+2\partial_{z}b(z)}{F(z)}+\frac{2\left(\lambda+F'(z)\right)}{F(z)}x+c(z)\)
         \[\partial_{zz} \mathcal{F}-\frac{1}{2}\left(f\,\partial_xf+\partial_zf\right)\partial_x\mathcal{F} +\frac{1}{2\varepsilon}\partial_yf\partial_y\mathcal{F} +\frac{1}{2} \partial_xf\partial_z\mathcal{F}=-\lambda f.\]
         \item For \(\mu\ne 0\),  the line \((c_4)\) implies that \[\partial_{yy}F_2(y,z)=-\varepsilon\lambda+\frac{\varepsilon\mu}{2}\partial_{xx}F_3(x,z)\Leftrightarrow F_3(x,y)=\frac{\left(\varepsilon\lambda+\partial_{yy}F_2(y,z)\right)}{\varepsilon\mu}x^2+F_4(z)x+F_5(z)\]
         \[f(x,y,z)=\frac{2\partial_{z}F_2(y,z)}{F(z)}+\frac{\left(\varepsilon\lambda+\partial_{yy}F_2(y,z)\right)}{\varepsilon\mu}x^2+F_4(z)x+F_5(z)\] \(\mathcal{F}(x,y,z)=F(z)x+F_2(y,z)\).
     \end{enumerate}
 \end{proof}
 \begin{theorem}\label{fin}
   Let $\mathcal{F}$ be a smooth function of $M$ depending on the three variables. For any nonzero $\beta$ constant, \((M,g, \nabla \mathcal{F}, \beta,\lambda, \mu)\) is a gradient Ricci-Yamabe soliton if and only if   \(\mathcal{F}(x,y,z)=F_1(y,z)x+F_2(y,z)\)  and  \[f(x,y,z)=\exp\left(\frac{xF(z)}{\beta}\right)\int_{y_0}^ya(s,z)ds+\frac{2\partial_zF_2(y,z)}{F(z)}+b(x,z)\] with the constraints: \[\begin{cases}
 \partial_{xz} \mathcal{F}-\frac{1}{2}\partial_xf \partial_{x} \mathcal{F}=-\lambda+(\frac{\mu-\beta}{2})\partial_{xx}f \\
           \partial_{yy} \mathcal{F}=\varepsilon(-\lambda+\frac{\mu}{2} \partial_{xx}f)\\
            \partial_{zz} \mathcal{F}-\frac{1}{2}\left(f\,\partial_xf+\partial_zf\right)\partial_x\mathcal{F} +\frac{1}{2\varepsilon}\partial_yf\partial_y\mathcal{F} +\frac{1}{2} \partial_xf\partial_z\mathcal{F}=-\beta\left(f\partial_{xx}f-\varepsilon\partial_{yy}f\right)+(-\lambda+\frac{\mu}{2}\partial_{xx}f)f
     \end{cases}\] where $F$ is a smooth function depending on $z$ that is non-zero, $a$ and $F_2$ are smooth functions depending on the variables $y$, $z$, and $b$ is a smooth function depending on the variables $x$, $z$.
 \end{theorem}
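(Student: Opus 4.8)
The plan is to expand the gradient Ricci--Yamabe equation \eqref{YS1}, i.e. $\beta\,\mathrm{Ric}+\nabla^2\mathcal{F}=\big(-\lambda+\tfrac{\mu}{2}\mathrm{Scal}\big)g$, entry by entry, using the components of $\mathrm{Ric}_{ij}$, the identity $\mathrm{Scal}=f_{,11}$, and the Hessian $(\nabla^2\mathcal{F})_{ij}$ computed in Section~\ref{sec:preliminaires}. Writing $(x,y,z)=(x^1,x^2,x^3)$ and keeping $\beta\ne 0$, this reproduces exactly the six scalar relations $(l_1)$--$(l_6)$ appearing in the proof of Theorem~\ref{t1}, now without the a priori restriction that $\mathcal{F}$ depend only on $y,z$. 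The first two, $\partial_{xx}\mathcal{F}=0$ and $\partial_{xy}\mathcal{F}=0$, are integrated immediately: $(l_1)$ gives $\mathcal{F}=F_1(y,z)x+F_2(y,z)$, and $(l_2)$, which reads $\partial_yF_1=0$, forces $F_1$ to depend on $z$ alone, so I write $F:=F_1$. Since the branch $F\equiv 0$ (where $\mathcal{F}$ depends only on $y,z$) is already covered by Theorems~\ref{t1} and~\ref{TT}, the present theorem concerns the complementary branch in which $F$ is nowhere zero; this is what legitimizes the divisions by $F$ performed below.

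The heart of the argument is the integration of $(l_5)$ to pin down $f$. Substituting $\partial_x\mathcal{F}=F(z)$ and $\partial_{yz}\mathcal{F}=\partial_{yz}F_2$, the equation $\partial_{yz}\mathcal{F}-\tfrac12(\partial_yf)(\partial_x\mathcal{F})=-\tfrac{\beta}{2}\partial_{xy}f$ rearranges to $\partial_x(\partial_yf)-\tfrac{F(z)}{\beta}(\partial_yf)=-\tfrac{2}{\beta}\,\partial_{yz}F_2(y,z)$, a first-order linear ODE in $x$ for the unknown $\partial_yf$, with $y,z$ as parameters. Its homogeneous solution is $a(y,z)\exp\!\big(xF(z)/\beta\big)$, and because the right-hand side is independent of $x$ a particular solution is the $x$-constant function $2\,\partial_{yz}F_2/F$; hence $\partial_yf=a(y,z)\exp\!\big(xF(z)/\beta\big)+2\,\partial_{yz}F_2(y,z)/F(z)$. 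Integrating once more in $y$, with the $y$-independent factors $\exp(xF/\beta)$ and $1/F$ pulled outside, gives
\[
f(x,y,z)=\exp\!\Big(\tfrac{xF(z)}{\beta}\Big)\int_{y_0}^{y}a(s,z)\,ds+\frac{2\,\partial_z F_2(y,z)}{F(z)}+b(x,z),
\]
where $b(x,z)$ is the $y$-integration function --- precisely the asserted form. With $\mathcal{F}$ and $f$ now fixed in this shape, the remaining relations $(l_3)$, $(l_4)$, $(l_6)$ are unchanged and become exactly the three displayed constraints, which completes the forward implication.

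For the converse one substitutes $\mathcal{F}=F(z)x+F_2(y,z)$ and the displayed $f$ back into the six equations: $(l_1)$ and $(l_2)$ hold by inspection, $(l_3)$, $(l_4)$, $(l_6)$ are the assumed constraints, and the only relation needing a short verification is $(l_5)$, which is automatic since $f$ was constructed as its general solution --- one checks $\partial_{xy}f=\tfrac{F}{\beta}e^{xF/\beta}a(y,z)$ and $\partial_yf=e^{xF/\beta}a(y,z)+\tfrac{2}{F}\partial_{yz}F_2$ and sees both sides of $(l_5)$ agree. I expect the genuine difficulty to be not any single computation but the structural fact that $(l_6)$ is a nonlinear PDE coupling $F$, $F_2$, $a$, $b$ with no elementary closed form, so the honest formulation is the conditional one that keeps $(l_3)$, $(l_4)$, $(l_6)$ as explicit constraints, exactly in the spirit of Theorems~\ref{t1}--\ref{TT}; a secondary point to watch is that every division by $F(z)$ must be justified, which is why $F$ is assumed nowhere vanishing throughout.
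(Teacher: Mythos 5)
Your proposal is correct and follows essentially the same route as the paper: integrate $(m_1)$ and $(m_2)$ to get $\mathcal{F}=F(z)x+F_2(y,z)$, treat the $(m_5)$ equation as a first-order linear ODE in $x$ for $\partial_y f$ with homogeneous solution $a(y,z)\exp\!\big(xF(z)/\beta\big)$, integrate in $y$ to obtain the stated form of $f$ with the integration function $b(x,z)$, and keep $(m_3)$, $(m_4)$, $(m_6)$ as the residual constraints. Your explicit particular solution, converse check of $(m_5)$, and remark on the nowhere-vanishing of $F(z)$ only make explicit what the paper leaves implicit.
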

 \begin{proof}
 By the hypothesis we have 
     \[\begin{cases}
         \partial_{xx}\mathcal{F}=0& (m_1)\\
         \partial_{xy} \mathcal{F}=0& (m_2)\\
          \partial_{xz} \mathcal{F}-\frac{1}{2}\partial_xf \partial_{x} \mathcal{F}=-\lambda+(\frac{\mu-\beta}{2})\partial_{xx}f& (m_3)\\
           \partial_{yy} \mathcal{F}=\varepsilon(-\lambda+\frac{\mu}{2} \partial_{xx}f)& (m_4)\\
           \partial_{yz}\mathcal{F}-\frac{1}{2}\partial_yf\partial_x\mathcal{F}=-\frac{\beta }{2}\partial_{xy}f& (m_5)\\
            \partial_{zz} \mathcal{F}-\frac{1}{2}\left(f\,\partial_xf+\partial_zf\right)\partial_x\mathcal{F} +\frac{1}{2\varepsilon}\partial_yf\partial_y\mathcal{F} +\frac{1}{2} \partial_xf\partial_z\mathcal{F}=\\-\beta\left(f\partial_{xx}f-\varepsilon\partial_{yy}f\right)+(-\lambda+\frac{\mu}{2}\partial_{xx}f)f& (m_6)
     \end{cases}\]
  The line  \((m_1)\) shows that \(\mathcal{F}(x,y,z)=F_1(y,z)x+F_2(y,z)\) and the line \((m_2)\) implies that  \(\mathcal{F}(x,y,z)=F(z)x+F_2(y,z)\) 
     so the line \((m_3)\) becomes \[\partial_{yz}F_2(y,z)=-\frac{\beta}{2}\partial_x(\partial_yf(x,y,z))+\frac{F(z)}{2}\partial_yf(x,y,z)\] the homogeneous solution is  \(A_0(x,y,z)=a(y,z)\exp\left(\frac{xF(z)}{\beta}\right)\) and we get \[f(x,y,z)=\exp\left(\frac{xF(z)}{\beta}\right)\int_{y_0}^ya(s,z)ds+\frac{2\partial_zF_2(y,z)}{F(z)}+b(x,z).\]
 \end{proof}
 \begin{example}
  In the Theorem \ref{fin},
we choose 
\[\lambda=1,\qquad
F(z)=1,\qquad a(y,z)=0,\qquad b(x,z)=0,
\]
\[
F_1(y,z)=\alpha y - z, \qquad 
F_2(y,z)=-\tfrac{\varepsilon}{2}y^2 - \tfrac{C}{2}e^{-2z},
\]
where \(\alpha, C\) are constants.

Then we have
\[
\mathcal{F}(x,y,z)=(\alpha y - z)\,x - \tfrac{\varepsilon}{2}y^2 - \tfrac{C}{2}e^{-2z},
\qquad
f(x,y,z)=2\partial_z F_2(y,z)=2C e^{-2z}.
\]

\[
\textbf{Verification of the constraints :}
\]

1. first equation :
\[
\partial_{xz}\mathcal{F}=-\lambda.
\]
Indeed we have \(\partial_x\mathcal{F}=F_1(y,z)=\alpha y - z\), so
\[
\partial_{xz}\mathcal{F}=\partial_z(\alpha y - z)=-1=-\lambda.
\]

2. Second equation :
\[
\partial_{yy}\mathcal{F}=\varepsilon(-\lambda).
\]
We calculate
\[
\partial_{yy}\mathcal{F}=\partial_{yy}\!\left(-\tfrac{\varepsilon}{2}y^2\right)=-\varepsilon,
\]
and since \(\lambda=1\), we obtain 
\[
\partial_{yy}\mathcal{F}=-\varepsilon=\varepsilon(-1)=\varepsilon(-\lambda).
\]

3. Third equation :
\[
\partial_{zz}\mathcal{F}=-\lambda f.
\]
We have
\[
\partial_{zz}\mathcal{F}=\partial_{zz}\!\left(-\tfrac{C}{2}e^{-2z}\right)
=(-\tfrac{C}{2})(-2)^2 e^{-2z}=-2C e^{-2z}.
\]
On the other hand, we have
\[
-\lambda f=-1\cdot (2C e^{-2z})=-2C e^{-2z}.
\]
The two sides coincide.
 \end{example}

\end{document}